\definecolor{myblue}{rgb}{0.0, 0.0, 1.0}
\definecolor{mygreen}{rgb}{0.01,0.75,0.20}
\newcommand{\Hmm}[1]{\leavevmode{\marginpar{\tiny%
$\hbox to 0mm{\hspace*{-0.5mm}$\leftarrow$\hss}%
\vcenter{\vrule depth 0.1mm height 0.1mm width \the\marginparwidth}%
\hbox to 0mm{\hss$\rightarrow$\hspace*{-0.5mm}}$\\\relax\raggedright #1}}}
\newcommand{\dt}{\,\mathrm{d}t}
\newtheorem{thm}{Theorem}[section]
\newtheorem{cor}[thm]{Corollary}
\newtheorem{lem}[thm]{Lemma}
\newtheorem{lemma}[thm]{Lemma}
\theoremstyle{definition}
\newtheorem*{rem}{Remark}
\newcommand{\Z}{{\mathbb Z}}
\newcommand{\R}{{\mathbb R}}
\newcommand{\N}{{\mathbb N}}
\newcommand{\ph}{{\varphi}}
\def\ga{\alpha}
            \def\gl{\lambda}
\def\gs{\sigma}
     \def\Gd{\Delta}
\begin{document}
\title[Landis' conjecture on graphs]{On Landis' conjecture for positive Schr\"odinger operators on graphs}

	\author {Ujjal Das}
\address {Ujjal Das, Department of Mathematics, Technion Israel Institute of
	Technology,   Haifa, Israel}
\email {ujjaldas@campus.technion.ac.il, getujjaldas@gmail.com}
\author{Matthias Keller} 
\address{Matthias Keller, Universit\"at Potsdam, Institut f\"ur Mathematik, 14476  Potsdam, Germany}
\email{matthias.keller@uni-potsdam.de}

	\author{Yehuda Pinchover}
\address{Yehuda Pinchover,
	Department of Mathematics, Technion Israel Institute of
	Technology,   Haifa, Israel}
\email{pincho@technion.ac.il}

\date{\today}

\begin{abstract} In this note we study the  Landis conjecture for positive Schr\"odin\-ger operators on graphs. More precisely, we prove a Landis-type result in the form of a decay criterion that ensures when $ \mathcal{H} $-harmonic functions for a positive Schr\"odinger operator $ \mathcal{H} $ with potentials bounded from above by $ 1 $  are trivial. The positivity assumption on the operator allows us to impose slow decay across the entire graph, while requiring fast decay in only one direction, rather than throughout the whole graph. We then specifically look at the special cases of $ \mathbb{Z}^{d} $ and regular trees for which we get a explicit decay criterion. Moreover, we consider the fractional analogue of the  Landis conjecture on $ \mathbb{Z}^{d} $. Our approach relies on the discrete version of Liouville comparison principle which is also proved in this article.
	
		\medskip
	\noindent  {\em 2010 Mathematics  Subject  Classification.}
	Primary 35J10; Secondary 35B53, 35R02, 39A12.\\[-3mm]
	
	\noindent {\em Keywords:}  Landis Conjecture, Liouville comparison principle, Agmon ground state, unique continuation at infinity.
	
	{\color{blue} \ \ \ \ \ \ To appear in International Mathematics Research Notices}
\end{abstract}

\maketitle 

\section{Introduction}
A well known conjecture of Landis  says that an $\mathcal{H}$-harmonic function $ u $ of a Schr\"od\-inger operator $ \mathcal{H} = \Delta+V $ on $\mathbb{R}^{d}$, i.e.,
\begin{align*}
	\mathcal{H}u=\Delta u +V u := -\sum_{j=1}^{d}\partial_{j}^{2}u +Vu= 0 \ \ \ \mbox{on} \  \mathbb{R}^{d} 
\end{align*}
 with $ |V|\leq 1 $ which satisfies $$  |u(x)|\leq \exp({-C|x|} ) $$ for sufficiently large $ C>0 $ is trivially zero, \cite{Landis2,FBSR24}. Landis  also conjectured  a weaker version which states that   if  $$  |u(x)|\leq \exp({-|x|^{1+\varepsilon}} ) $$ for $ \varepsilon>0 $, then $ u=0 $ in $\R^d$. Such a result  can be seen as a property of {\it unique continuation at infinity} for $\mathcal{H}$-harmonic functions in $\R^d$.

In 1992, Meshkov \cite{Meshkov} provided a counterexample for a complex valued potential in two dimensions by constructing a {\em complex} valued bounded potential $ V $ and a nontrivial  $\mathcal{H}$-harmonic function $u$ in $\R^2$  which satisfies  $ |u(x)|\leq \exp({-C|x|^{4/3}} )$ for some constant $ C>0 $. Furthermore,  Meshkov showed that if 
\begin{align*}
	 \sup_{x\in \mathbb{R}^{d}} |u(x)| \exp({C|x|^{4/3}} )<\infty
\end{align*}
for all $ C>0 $, then $ u=0 $ in $\R^d$. {The result with exponent $ 2 $ instead of $ 4/3 $ already appears in the earlier work  \cite{FHH2O2} (put $ \beta=1 $ in Theorem 2.3 therein) which however seems to be neglected in the  literature.} {Although, due to {Meshkov's results,} the Landis conjecture is settled for complex-valued bounded potentials, it still remains open in the real-valued case.}

{The purpose of this paper is to study the Landis conjecture in the {\em real valued case} on general discrete graphs under the additional assumption that the Schr\"odinger operator in question is {\em positive} in the sense of the quadratic form which is equivalent to existence of a positive $\mathcal{H}$-superharmonic function. For the precise definition of $\mathcal{H}$ on discrete graphs and for more details on the positivity assumption, see Section~\ref{sec_set_up}. Our approach is inspired by the recent paper \cite{DP24}, where the corresponding problem in the continuum is studied. On the one hand, the positivity of the operator is clearly an essential restriction. However, with the additional positivity assumption, a sharp decay criterion for the validity of the Landis conjecture is obtained on the continuum case in \cite{DP24} and on graphs in the present paper. We note that various of the known results in $\R^d$ implicitly assume the positivity of $\mathcal{H}$ (see for example, \cite{ABG,KSW,Sirakov}).}

{There is a great deal of research on Landis-type conjecture, our bibliography refers only to a small fraction of it. Let us first review the literature that omits the positivity assumption.  In $ \mathbb{R}^{2} $, we should mention the breakthrough result \cite{LMNN} by Logunov, Malinnikova, Nadirashvili, and  Nazarov, which is, to our knowledge, the strongest results available so far. By the strongest result we mean that the authors assume the weakest, compared to the available literature, decay condition on $u$ for the validity of the Landis conjecture. For earlier results on $\R^2$, see the references in \cite{LMNN}. Landis-type results on $\R^d$ without a positivity assumption are proved in \cite{BK}.  In $\mathbb{Z}^d$, for a bounded potential $V$, without the positivity assumption, it has been shown in \cite{LM} that if
$$ {\liminf_{N \rightarrow \infty} \frac{1}{N}\log \max_{|x|_\infty \in [N,N+1]}|u(x)|  <-\|V\|_{\infty}-4d+1} \,,$$
then $u=0$ in $\mathbb{Z}^d$, where $|x|_{\infty}=\max_{k=1,\ldots,d}|x_{k}|$. For related results, see \cite{FV}.
We also refer to  \cite{FBRS24}, where the authors recently studied the Landis-type uniqueness results in a mesh $(h\mathbb{Z})^d$, $h>0$ under certain summation criteria on $u$, and analyzed the behavior of the solutions as the mesh-size $h$ decreases
to zero.}

In \cite{KSW}, authors proved a weaker version of Landis' conjecture in $\R^2$ with positivity assumption on the potential $V$ (which imply that $\mathcal{H}$ is positive). {The papers \cite{ABG,DP24,Sirakov} address the conjecture in $ \mathbb{R}^{d} $  under the positivity assumption, and \cite{Rossi,DP24} study it assuming positivity in {\it exterior domains}. One may note that some of the mentioned articles concern more general operators than $\mathcal{H}$. 

In the Euclidean settings, Landis-type results have been also studied for Dirac operators \cite{Cassano}, {fractional Schr\"odinger operators \cite{RW19,KOW},} the time-dependent Schr\"odinger equation and the heat equations  \cite{EKPV10,EKPV16}, see also the references therein. 
There are results for time-independent and time dependent Schr\"odinger equations on graphs with discrete Laplacian in \cite{BMP24,BP24,FB,FBRS24,JLMP18}.} The case of a half cylinder in $ \mathbb{R}^{d} $ is addressed in \cite{Fil}. {For unique continuation results on manifold, we refer to \cite{PPV} and the references therein. We also refer to a recent review on Landis' conjecture \cite{FBSR24}, which describes the state of the art on the topic and provides a comprehensive list of references.}

In Theorem \ref{t:Landis_general}, we show that $\mathcal{H}$-harmonic functions, i.e., solutions of the equation $\mathcal{H}u \!=\! 0 $ on a discrete graph, are trivial under much weaker decay conditions than those discussed above. Moreover, instead of $ |V|\leq 1 $ as in the Landis conjecture,  we only assume $$ V\leq 1 .$$ While the positivity assumption on the Schr\"odinger operator of course restricts how negative $V$ can be, we do not need a pointwise lower bound on $V$.   

Next, we give an overview of the decay conditions given in Theorem \ref{t:Landis_general} and the overall strategy. For the details and precise definitions, we refer the reader to the next sections.  We consider $\mathcal{H}$-harmonic   functions $ u $ which should satisfy an ``a priori estimate'' on the whole space which is typically not  in $ \ell^{2} $ and even might only be in $\ell^\infty$.  Secondly, a much stronger  ``{decay estimate}'' must be satisfied but only in one direction of the space.  Such  the  Landis an $\mathcal{H}$-harmonic $ u $ is then shown to be trivial.

The ``a priori estimate'' will guarantee  that {$ u $ has a constant sign, and therefore, we may assume that $u$ is positive.} If the function we compare $ u $ to, was in $ \ell^{2} $, then  positivity of $ u $ is trivial because the $\mathcal{H}$-harmonic function $ u $ would then be either trivial or an eigenfunction at the bottom of the spectrum, such an eigenfunction is known to be strictly positive. To go beyond $ \ell^{2} $, we prove and employ a discrete version of a {\it{Liouville comparison principle}}, Theorem~\ref{thm:comparison}, which goes back to \cite{Pinchover} in the continuum. Specifically, the comparison of  the  Landis an $\mathcal{H}$-subharmonic  function $u_+$ will be made with a ``slowly decaying'' {\it{Agmon ground state}} of a related {\it{critical operator}}.

For the ``decay estimate'', we compare $ u $ with $ G_{1} $ which is given as the resolvent of $ \mathcal{H}_1=\Delta+1 $   applied to the delta function at a fixed vertex. This $ G_{1} $ is a positive solution of the equation $ \mathcal{H}_1 \varphi=0  $ of minimal growth at infinity. If we now assume that the potential satisfies $ V\leq 1 $,  then  $ u $ is also a positive supersolution of $ \mathcal{H}_1 \varphi=0 $, and hence, it cannot decay faster than $ G_{1} $ in any direction unless $u = 0$.  For general graphs, this is comprised in Theorem~\ref{t:Landis_general} and Corollaries~\ref{cor:landis1},~\ref{cor:landis2} and~\ref{cor:landis3}. Moreover, Theorem~\ref{t:Landis_general} is sharp, see the remark below the proof of Theorem~\ref{t:Landis_general}.

While for general graphs, we have of course no explicit estimates of the resolvent, on $ \mathbb{Z}^{d} $ and regular trees one has a rather good understanding of $ G_{1} $, \cite{Michta,Molchanov}. Specifically, for  the  Landis an $\mathcal{H}$-harmonic function $ u $ of a positive Schr\"odinger operator $ \mathcal{H} $ on $ \mathbb{Z}^{d} $ with potential $ V\leq 1 $, we can show the following:  For   $ d=1,2 $  the ``a priori estimate'' make us impose the condition that $ u $ is bounded in the whole space, and for $ d\ge 3 $ that $ u $ has to be bounded by the square root of the Laplacian's minimal positive Green function $ G_{0} $ at $ 0 $, which implies that $ u\in O(|x|^{(2-d)/2}) $ near infinity, see e.g. \cite{Woess}. More precisely,  this uses that $ G_{0}^{1/2} $ is an Agmon ground state for $ \Delta-W $ for a specific {\it critical Hardy weight} $ W$, \cite{KellerPinchoverPogorzelski_optimal}. 
 Furthermore, it is known that  one can estimate $ G_{1}(x) $ by a multiple of $ |x|^{(1-d)/2} \mathrm{e}^{-|x|}$, cf. \cite{Molchanov,Michta}. Hence, on $ \mathbb{Z}^{d} $, if   $ u $ satisfies  the a priori estimate  $ u\in O(|x|^{(2-d)/2}) $ and
 \begin{align*}
 	\liminf_{x\to\infty}|u(x)||x|^{(d-1)/2} \mathrm{e}^{|x|}=0,
 \end{align*}
then $ u=0 $ by Theorem~\ref{mainZd}, where $x\to\infty$ is always understood with respect to the one-point compactification of the underlying space. 

On $ d $-regular trees, we obtain that if $ u\in O(|x|^{1/2}d^{-|x|/2}) $ and  
\begin{align*}
	\liminf_{x\to\infty}|u(x)|d^{|x|}  =0,
\end{align*}
then $ u=0 $ by Theorem~\ref{t:trees}. Here we use the considerations for {\it{optimal Hardy weights}} on regular trees by \cite{Berchio}.

We furthermore consider the Landis-type results for Schr\"odinger operator involving the {\em fractional Laplacian} $\Delta^{\sigma}$ on $ \mathbb{Z}^{d} $, $\sigma \in (0,1)$. We use recent estimates on the Green's function derived in \cite{DER24} to show that if $ u\in O \left({|x|^{2\sigma-d}}\right) $ and  $$   \liminf_{x\to\infty} |u(x)| |x|^{2\sigma+d}=0, $$
then $ u \!=\!0 $ $ \mathbb{Z}^{d} $ by Theorem~\ref{mainZdfrac}. Moreover, for $ d=1 $, the a priori bound which is needed can be improved to $  O \left(|x|^{(2\sigma-d)/2}\right) $ due to recent results by \cite{KN23}, see Theorem~\ref{fracd=1}.

The paper is structured as follows. In the next section, we  introduce the set-up and 			prove a 			Liouville comparison theorem, Theorem~\ref{thm:comparison}. In Section~\ref{sec:graphs}, we study the  problem on general graphs and prove the main				abstract result, Theorem~\ref{t:Landis_general} together with its corollaries, 	Corollaries~\ref{cor:landis1},~\ref{cor:landis2} and~\ref{cor:landis3}. In Section~\ref{sec:Examples}, we apply these results to 										$ \mathbb{Z}^{d} $ in Theorem~\ref{mainZd} and~\ref{mainZd_2}, and to regular trees in Theorem~\ref{t:trees}.	Finally, in Section~\ref{sec:Frac} we study the fractional analogue.

\section{Set up and a Liouville comparison theorem}\label{sec_set_up}
In this section, we recall some basic notions and results in criticality theory that are essential for the development of this article, where we use \cite{KellerPinchoverPogorzelski} as the main source for the  discrete setting. 
In addition, we prove a discrete analogue of a Liouville comparison principle for Schr\"odinger operators on graphs.

Throughout the paper, we use the following notation and conventions:

\begin{itemize}
\item For a set $A$, we write $\#A$ to denote its cardinality.

\item For two subsets $A,B$ in a discrete topological space $X$, we write $A \Subset B$ if  $A$  is a compact, i.e., a finite subset of  $B$.

\item If a function $ f \geq 0$ is not trivial, then we say $ f $ is \emph{positive}. Moreover, if $ f>0 $, then we say $ f $ is \emph{strictly positive}. 
\item For a set $A$,  we denote by $1_{A}$ the characteristic function of $A$ and for a singleton set $\{x\}$ we denote $1_{x}=1_{\{x\}}$. 

\item For two functions $ f, g:A\to\R $, we denote $ f \vee g $ the pointwise maximum of $f$ and $g$.

\item For positive functions $ f, g:A\to\R $, we write $ f\asymp g $ if there is a constant $ C>0 $ such that $ C^{-1}f \leq g\leq C f$.

\item For two functions $ f,g :X\to \R $ such that $ g $ does not vanish outside of a compact set,  we denote 
\begin{align*}
f\in O(g)\qquad\Longleftrightarrow\qquad \limsup_{x\to \infty}\frac{|f(x)|}{|g(x)|}<\infty,
\end{align*}
where the limit $ x\to\infty $ is taken in the topology of the one-point compactification $ X\cup\{\infty\} $ of $ X $.
\end{itemize}

Let $ X $ be an infinite countable set equipped with the discrete topology. We denote by $ C(X) $ the space of real valued functions on $ X $ and by $ C_{c}(X) $ the subspace of functions of compact (i.e. finite) support. For a function $ f:X\to \R $, we write $$  \sum_{X}f=\sum_{x\in X}f(x),  $$
whenever the sum is absolutely convergent.
Any strictly positive function $ m:X\to (0,\infty) $ extends to a measure of full support on $ X $ via $ m(A)=\sum_{x\in A}m(x) $.

A {\em graph} $ b $ over the measure space $ (X,m) $ is a symmetric function $ b:X\times X\to [0,\infty) $ with zero diagonal which is locally summable, i.e.,
\begin{align*}
	\sum_{y\in X}b(x,y)<\infty,\qquad { x\in X}.
\end{align*}
Given a graph $ b $, we denote $ x\sim y $ whenever $ b(x,y)>0 $ and think of $ x $ and $ y $ to be connected by an edge. The {\em (vertex) degree} of $x \in X$ is the number of vertices connected to
$x$. 

{Throughout this paper, we assume that the graph $b$ is \emph{connected}}, i.e., for every $ x,y\in X $ there are vertices $ x=x_{0}\sim \ldots \sim x_{n}=y $ connecting $ x $ and $ y $ by a path.

We introduce the (positive) Laplacian $ \Delta =\Delta_{b,m}$  acting 
on its formal domain $ \mathcal{F}=\mathcal{F}_{b} $
$$  \mathcal{F}=\{ f:X\to \mathbb{R}\mid \sum_{y\in X}b(x,y)|f(y)|<\infty\mbox{ for all }x\in X\}  $$ 
as
\begin{align*}
	\Delta f(x)=\frac{1}{m(x)}\sum_{y\in X}b(x,y)(f(x)-f(y)).
\end{align*} 
For  a {\em potential} $ V:X\to \R $, we denote the corresponding Schr\"odinger operator 
$ \mathcal{H}=\mathcal{H}_{b,V,m} $ acting on $ f\in \mathcal{F} $ as 
\begin{align*}
	\mathcal{H}f(x)=\Delta f(x)+ {V(x)}f(x).
\end{align*}

A  function $u$ is $\mathcal{H}$-$({sub}/{super})harmonic$ on $X$
if $u\in \mathcal{F}$ and $\mathcal{H}u=0$ ($\mathcal{H}u {\leq}  0$ / $\mathcal{H}u {\geq}  0$) on $X$.
We write $$ \mathcal{H}\geq 0 \qquad   \mbox{in }  X $$
if there exists a positive $\mathcal{H}$-superharmonic function $u$ on $X$, and in this case we say that the corresponding Schr\"odinger operator $\mathcal{H}$ is {\it{positive} on $X$}.
\begin{rem}
We recall that in the continuum the $\mathcal{H}$-{(sub/super)harmonic} functions in $\R^d$ are defined in the weak sense i.e., $u \in W^{1,2}_{\mathrm{loc}}(\R^d)$ is called an $\mathcal{H}$-{(sub/super) harmonic} function if 
$$\langle\mathcal{H}u,\varphi \rangle =\int_{\R^d} \nabla u \nabla \varphi \ {\mathrm{d}}x + \int_{\R^d} Vu  \varphi \ {\mathrm{d}}x = 0\quad (\mbox{resp.} \le0 \mbox{ or }\ge 0)  $$
for all  $  \varphi \in C_c^{\infty}(\R^d) $ (with $ \varphi\ge 0 $.)  However, in the discrete setting weakly $\mathcal{H}$-harmonic functions are already $\mathcal{H}$-harmonic functions, see \cite[Theorem~2.2 and Corollary~2.3]{HKLW}.
Note that, for consistency with the discrete settings, we consider the (positive) Laplacian on $\R^d$ as $\Delta_{\R^d}:= -\sum_{j=1}^{d}\partial_{j}^{2}$. 
\end{rem}
By Green's formula \cite[Lemma~2.1]{KellerPinchoverPogorzelski}, $ \mathcal{H} $ is related to its corresponding energy form $ \mathcal{Q}=\mathcal{Q}_{b,V,m} $ 
\begin{align*}
 \mathcal{Q}(\ph) =\frac{1}{2}\sum_{X\times X}b |\nabla \ph|^{2}+\sum_{X}m V\ph^{2},
\end{align*}
which takes finite values for $\ph\in  C_{c}(X) $ and where $ \nabla_{xy} \ph=\ph(x)-\ph(y) $. 
Unless  stated otherwise, we assume  throughout the paper that $ \mathcal{Q} $ is {\em positive} on $ C_{c}(X) $, i.e.,
\begin{align*}
	\mathcal{Q}(\ph)\ge 0\qquad \mbox{for all } \ph\in C_{c}(X).
\end{align*}
{By the Agmon-Allegretto-Piepenbrink-type theorem \cite[Theorem~4.2]{KellerPinchoverPogorzelski}, $\mathcal{Q}(\ph)\!\geq \!0$ on $C_{c}(X)$ if and only if $\mathcal{H}\geq 0$ in $X$.}

{A  positive function $ \phi \!\in\! \mathcal{F}$ is called  a \emph{positive solution  of minimal growth at infinity   in $X$} for $ \mathcal{H} $,  if $ \phi>0 $ and $\mathcal{H}\phi = 0$   in  $X\setminus K_{0}$ for some compact $ K_{0}\Subset X $, and for any positive $ \psi \in \mathcal{F}$ such that $ \psi>0 $ and  ${\mathcal{H}\psi\geq 0}$ on $X\setminus K$  for  $ K_{0}\Subset K\Subset X$ which satisfies~$\phi\leq \psi$ on	$K$, one has $\phi\leq \psi$ in~$X\setminus K$. } 

{A positive $\mathcal{H}$-harmonic function $\phi$ in $X$ which is a solution  of minimal growth at infinity in $X$ is called an {\em Agmon ground state} of $\mathcal{H}$.}

We call a positive Schr\"odinger operator $\mathcal{H}$ \emph{subcritical} if there is a non-trivial $W \ge 0$ such that for all $\ph\in C_c(X)$
\begin{align*}
\mathcal{Q}(\ph)\ge \sum_{X}m W \ph^{2} .
\end{align*}
We call such a function $ W $  a \emph{Hardy weight} for $ \mathcal{H} $. If $\mathcal{H}\geq 0 $ is not subcritical, then we say it is \emph{critical}.
Furthermore, we say that a Hardy weight $ W $ is a \emph{critical Hardy weight} for $ \mathcal{H} $ if $ \mathcal{H}-W $ is critical. If the graph $ b $ is connected and the potential $ V$ vanishes, i.e., $ V=0 $, then one also says the graph  is \emph{transient} if $ \mathcal{H} $ is subcritical and \emph{recurrent} if $ \mathcal{H} $ is critical, cf. \cite[Theorem~6.1 and Definition~6.2]{KellerBook}.

Recall that the operator $\mathcal{H}$ is critical if and only if there is $x\in X$ and functions $\ph_{n}\in C_{c}(X)$ with $\ph_{n} \!\ge \!0$, $C^{-1} \!\le\! \ph_{n}(x) \!\le\! C$ for some $ C>0 $ and all $n\geq1$, such that
\begin{align*}
    \mathcal{Q}(\ph_{n})\to0\quad \mbox{as } n\to\infty,
\end{align*}
cf. \cite[Theorem~5.3]{KellerPinchoverPogorzelski}.
We call such a sequence $(\ph_{n})$  a \emph{null-sequence} for $ \mathcal{H} $ with respect to the vertex $x\in X$. Observe that such a null-sequence converges to an Agmon ground state of $ \mathcal{H} $, \cite[Theorem~5.3]{KellerPinchoverPogorzelski}. A further characterization of criticality of $\mathcal{H}$ is that up to scalar multiples there is a unique positive $\mathcal{H}$-superharmonic function which is then $\mathcal{H}$-harmonic and an Agmon ground state, see e.g.  \cite[Theorem~5.3]{KellerPinchoverPogorzelski}.

Finally, we recall  the {\it{ground state transform}}, cf. \cite[Proposition~4.8]{KellerPinchoverPogorzelski}. {Let 
$ f\in \mathcal{F} $ be a positive function, then for all $\ph\in C_{c}(X)$} 
\begin{align*}
    \mathcal{Q}(f\ph) =\frac{1}{2}\sum_{X\times X}b (f\otimes f)|\nabla \ph|^{2}+\sum_{X}m(f\mathcal{H}f)\ph^{2} \,,
\end{align*}
where $f\otimes f :X\times X\to [0,\infty) $ is defined as $f\otimes f(x,y)=f(x)f(y)$.

We now prove {a Liouville} comparison theorem which is vital for the considerations of this paper, cf. \cite{Pinchover}.

\begin{thm}[{Liouville} comparison principle]\label{thm:comparison} Suppose $\mathcal{H}$ and $\mathcal{H}'$ are positive Schr\"odinger operators associated to connected graphs $b$, $b'$ and potentials $V, V' $ over $ (X,m) $. Let $u\in \mathcal{F}_b$ and $v\in \mathcal{F}_{b'}$ be such that
\begin{itemize}
  \item [(a)] $\mathcal{H}'$ is critical and $v>0$ is its Agmon ground state,
  \item [(b)] $u_{+}\neq 0$  and $\mathcal{H}u_{+}\leq0$,
  \item [(c)] $ b(u_{+}\otimes u_{+})\leq C  b'(v\otimes v)$ for some $ C>0 $.
\end{itemize}
Then, $\mathcal{H}$ is critical and $u>0$ is an Agmon ground state of $\mathcal{H}$.
\end{thm}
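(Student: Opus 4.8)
The plan is to transplant a null-sequence from the critical operator $\mathcal{H}'$ to $\mathcal{H}$: the ground state transform reduces both energy forms, on the relevant test functions, to Dirichlet-type forms built from $b(u_{+}\otimes u_{+})$ and $b'(v\otimes v)$, and hypothesis (c) serves as the bridge between them. Once $\mathcal{H}$ is known to be critical, the uniqueness of positive superharmonic functions for critical operators immediately identifies $u$ with the Agmon ground state, the continuum model being \cite{Pinchover}.

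\textbf{Step 1 (two ground state transforms).} Since $v>0$ is $\mathcal{H}'$-harmonic, the ground state transform gives, for every $\varphi\in C_{c}(X)$,
\[
\mathcal{Q}'(v\varphi)=\tfrac12\sum_{X\times X}b'(v\otimes v)|\nabla\varphi|^{2}.
\]
The function $u_{+}$ is positive (nonnegative and, by (b), nontrivial) and lies in $\mathcal{F}_{b}$, because $\sum_{y}b(x,y)u_{+}(y)\le\sum_{y}b(x,y)|u(y)|<\infty$; hence the ground state transform applies to it as well, and since by (b) the zeroth-order term $\sum_{X}m\,(u_{+}\mathcal{H}u_{+})\varphi^{2}$ is nonpositive,
\[
\mathcal{Q}(u_{+}\varphi)\le\tfrac12\sum_{X\times X}b(u_{+}\otimes u_{+})|\nabla\varphi|^{2}\qquad(\varphi\in C_{c}(X)).
\]
Combining the two displays with the pointwise bound (c) yields the key estimate $\mathcal{Q}(u_{+}\varphi)\le C\,\mathcal{Q}'(v\varphi)$ for all $\varphi\in C_{c}(X)$.

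\textbf{Step 2 ($\mathcal{H}$ is critical).} Criticality of $\mathcal{H}'$ provides a null-sequence $\varphi'_{n}\in C_{c}(X)$, $\varphi'_{n}\ge0$, with $\mathcal{Q}'(\varphi'_{n})\to0$; since null-sequences converge to an Agmon ground state and the latter is unique up to a positive factor, $\varphi'_{n}\to c\,v$ pointwise for some $c>0$. Set $\psi_{n}:=\varphi'_{n}/v\in C_{c}(X)$, so that $\psi_{n}\ge0$, $v\psi_{n}=\varphi'_{n}$, and $\psi_{n}\to c$ pointwise. Pick a vertex $x_{1}$ with $u_{+}(x_{1})>0$ (possible since $u_{+}\neq0$) and set $\eta_{n}:=u_{+}\psi_{n}\in C_{c}(X)$. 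Then $\eta_{n}\ge0$, the key estimate with $\varphi=\psi_{n}$ gives $\mathcal{Q}(\eta_{n})\le C\,\mathcal{Q}'(\varphi'_{n})\to0$, and $\eta_{n}(x_{1})=u_{+}(x_{1})\psi_{n}(x_{1})\to c\,u_{+}(x_{1})>0$. Thus, after discarding finitely many terms, $(\eta_{n})$ is a null-sequence for $\mathcal{H}$ with respect to $x_{1}$, so by the criticality criterion recalled above $\mathcal{H}$ is critical.

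\textbf{Step 3 (identification of $u$, and the main obstacle).} A critical operator admits, up to positive multiples, a unique positive superharmonic function, which is then harmonic and is its Agmon ground state. By (b), $u_{+}$ is exactly such a function, whence $u_{+}=c'\phi$ with $\phi$ the Agmon ground state of $\mathcal{H}$ and $c'>0$; in particular $u_{+}>0$ everywhere, which forces $u=u_{+}$, so $u>0$ is an Agmon ground state of $\mathcal{H}$. I expect the only genuinely delicate point to be the normalization in Step 2: the null-sequence for $\mathcal{H}'$ comes normalized at some vertex where $u_{+}$ may vanish, making $\eta_{n}=u_{+}\psi_{n}$ useless there; this is exactly why one uses the convergence $\varphi'_{n}\to c\,v$, which controls $\psi_{n}$ — hence $\eta_{n}$ — at an arbitrary vertex where $u_{+}>0$. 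Everything else is a routine application of the ground state transform and the criticality machinery recalled in this section.
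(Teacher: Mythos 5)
Your proposal is correct and follows essentially the same route as the paper: both transplant the null-sequence of $\mathcal{Q}'$ via $\varphi_k\mapsto u_{+}\varphi_k/v$, use the ground state transform together with $u_{+}\mathcal{H}u_{+}\le 0$ and hypothesis (c) to get $\mathcal{Q}(u_{+}\varphi_k/v)\le C\,\mathcal{Q}'(\varphi_k)\to 0$, and then invoke the criticality machinery to identify $u_{+}$ with the Agmon ground state. The only (harmless) differences are that you verify the normalization of the transplanted sequence at a vertex where $u_{+}>0$ explicitly — a point the paper passes over — and that you conclude via uniqueness of positive superharmonic functions rather than via pointwise convergence of the null-sequence to a multiple of $u_{+}$.
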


\begin{proof} By (a) there exists a null-sequence $(\ph_{k}) \subset C_{c}(X)$ with respect to a vertex $x\in X$ for the energy function $ \mathcal{Q}' $ of $\mathcal{H}'$. For $ k\in\mathbb{N} $, set
\begin{align*}
    \psi_{k}=\frac{u_{+}\ph_{k}}{v} \, .
\end{align*}
Observe that the ground state transform of the energy functional $\mathcal{Q}$ of $\mathcal{H}$ with respect to $u_+$, (b) and (c) yields 
\begin{align*}
	\mathcal{Q}(\psi_{k})&= \frac{1}{2}\sum_{X\times X}b (u_{+}\otimes u_{+})|\nabla (\ph_{k}/v)|^{2}+\sum_{X}m(u_{+}\mathcal{H}u_{+})({\ph_{k}}/{v})^{2}\\&\le \frac{C}{2}\sum_{X\times X}b' (v\otimes v)|\nabla (\ph_{k}/v)|^{2}\\
	&= C	\mathcal{Q}'(\ph_{k})\to 0,\qquad k\to\infty.
\end{align*}
Thus, $( \psi_{k} )$ is a null-sequence for $ \mathcal{H} $, and $ \mathcal{H} $ is critical. In particular, 
$( \psi_{k} )$ converges pointwise to an Agmon ground state for $ \mathcal{H} $. Since $ \mathcal{H}' $ is critical and $ v>0 $ is $ \mathcal{H} $-harmonic, the null-sequence $ (\ph_{k}) $ converges pointwise to a positive multiple of $ v>0$. Hence, $ (\psi_k) $ converges pointwise to a positive multiple of $ u_{+} $ which is therefore strictly positive.  Thus, $u_+=u$ is an Agmon ground state for $ \mathcal{H} $. 
\end{proof}

\section{General Landis type theorem}\label{sec:graphs}
In the present section we prove the main abstract Landis-type theorem and derive some of its important corollaries which will play crucial role in the subsequent section concerning the cases of the Euclidean lattice and regular trees.

Let $ \mathcal{H} $ be a Schr\"odinger operator associated to a connected graph $ b $ over $ (X,m) $ with potential $V $. {We assume that $\mathcal{H}$ is positive which is equivalent to the fact that $\mathcal{Q}(\ph)\geq 0$ for all $\ph\in C_c(X)$  as discussed above}.

Since the energy functional of $ \Delta  $ is positive, the operator $ (\Delta+\alpha) $ is subcritical for $ \alpha>0 $. Hence, for every $ o\in X $ there exists a  Green function $ G_{\alpha} $ which is the smallest strictly positive function $ \phi $ such that $ (\Delta+\alpha )\phi\ge 1_{o} $, cf. \cite[Theorem~5.16]{KellerPinchoverPogorzelski}, where $1_{o}$ denotes the characteristic function of the vertex $o$. Moreover, the Green function satisfies
\begin{align*}
	(\Delta+\alpha )G_{\alpha}=1_{o}
\end{align*} 
and is a positive solution of minimal growth at infinity for $ (\Delta+\alpha) $ {\cite[Theorem~2.5]{F24}.}  Let $ L $ be the positive selfadjoint operator associated to the form closure  of  $ \mathcal{Q}_{0}\vert_{C_{c}(X)} $ in $ \ell^{2}(X,m) $ where $ \mathcal{Q}_{0} $ is the energy functional of $  \Delta $. Then, also by \cite[Theorem~5.16]{KellerPinchoverPogorzelski}
\begin{align*}
	G_{\alpha}=(L+\alpha)^{-1}1_{o} \, .
\end{align*}
Moreover, the following limit 
\begin{align*}
	G_{0}:=\lim_{\alpha\searrow\, 0}(L+\alpha)^{-1}1_{o}
\end{align*}
exists pointwise but may take the value $ +\infty $. If $ \Delta $ is subcritical, then $ G_{0} $ takes only finite (strictly positive) values on $ X $ since we assumed the graph to be connected \cite{KellerBook}.

The next theorem is the abstract main result of the paper. 
\begin{thm}\label{t:Landis_general}
{Let $ \mathcal{H}=\Gd+V$ be a positive Schr\"odinger operator associated to a connected graph $ b $ over $ (X,m) $ with potential $ V\le 1 $. Let $ \mathcal{H}'$ be a critical Schr\"odinger operator with Agmon ground state $ v>0 $ associated to a connected graph $ b'$ over $ (X,m)$.  
Any $ \mathcal{H} $-harmonic function $ u $	satisfying
\begin{align*}
	 b(|u|\otimes |u|)\leq C  b'(v\otimes v) \ \mbox{ for some }  C>0 \quad \mbox{and} \quad \liminf_{x\to\infty} \frac{|u(x)|}{G_{1}(x)}=0
\end{align*} is trivially zero.}
\end{thm}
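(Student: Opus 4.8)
The plan is to combine the Liouville comparison principle (Theorem~\ref{thm:comparison}) with the minimal-growth property of the Green function $G_1$ of $\mathcal{H}_1 := \Delta + 1$. The argument splits into two independent parts, matching the ``a priori estimate'' and the ``decay estimate'' in the introduction.

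\emph{Step 1: $u$ has constant sign.} Since $u$ is $\mathcal{H}$-harmonic, the functions $u_+$ and $u_-$ are both $\mathcal{H}$-subharmonic (this is the standard fact that $\mathcal{H}(u_\pm) \le 0$ whenever $\mathcal{H}u = 0$, which follows from Green's formula / the pointwise inequality $(a-b)(a_+ - b_+) \ge (a_+ - b_+)^2$ on graphs). Suppose, for contradiction, that $u_+ \ne 0$. The hypothesis $b(|u|\otimes|u|) \le C\, b'(v\otimes v)$ immediately gives $b(u_+ \otimes u_+) \le C\, b'(v\otimes v)$, so conditions (a), (b), (c) of Theorem~\ref{thm:comparison} are all met with $\mathcal{H}' $ the given critical operator. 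Hence $\mathcal{H}$ is critical and $u_+ = u$ is a strictly positive Agmon ground state of $\mathcal{H}$; in particular $u > 0$ everywhere. If instead $u_+ = 0$, then $u \le 0$ and we apply the same reasoning to $-u$ (also $\mathcal{H}$-harmonic) to conclude either $u \equiv 0$, in which case we are done, or $-u > 0$ is an Agmon ground state. So in all nontrivial cases $u$ is strictly positive (after possibly replacing $u$ by $-u$, which does not affect the decay hypothesis since it only involves $|u|$), and $u$ is an Agmon ground state of $\mathcal{H}$, hence in particular a positive $\mathcal{H}$-harmonic function on $X$.

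\emph{Step 2: comparison with $G_1$ forces $u \equiv 0$.} Since $V \le 1$, for the (now strictly positive) $\mathcal{H}$-harmonic $u$ we have, pointwise on $X$,
\begin{align*}
\mathcal{H}_1 u = \Delta u + \tfrac{1}{m} u = \mathcal{H} u + \tfrac{1-V}{m} u = \tfrac{1-V}{m}\, u \ge 0,
\end{align*}
so $u$ is a positive $\mathcal{H}_1$-supersolution on all of $X$. Now recall that $G_1 = (L+1)^{-1}1_o$ is a positive solution of minimal growth at infinity for $\mathcal{H}_1$, i.e. it is $\mathcal{H}_1$-harmonic outside the compact set $\{o\}$ and dominated by any positive $\mathcal{H}_1$-supersolution $\psi$ on $X \setminus K$ once $G_1 \le \psi$ on some $K$ with $\{o\} \Subset K$. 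Choosing $K = \{o\}$ (or a slightly larger finite set), and scaling: pick $\varepsilon > 0$ and let $\psi_\varepsilon := u/\varepsilon$; since $u(o) > 0$ we may choose $\varepsilon$ small enough that $G_1(o) \le u(o)/\varepsilon$, and then minimal growth gives $G_1 \le u/\varepsilon$ on $X \setminus \{o\}$, i.e. $u(x)/G_1(x) \ge \varepsilon$ for all $x \ne o$. This contradicts $\liminf_{x\to\infty} u(x)/G_1(x) = 0$ unless the set over which we argue is empty, i.e. unless $u$ was trivial to begin with. Hence $u \equiv 0$.

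\emph{Main obstacle.} The delicate point is Step~2: one must make sure the minimal-growth comparison can be applied with $u$ itself (a \emph{global} supersolution) against $G_1$ with the compact set taken as small as $\{o\}$, so that the resulting inequality $u \ge \varepsilon G_1$ holds on the complement of a single vertex and genuinely contradicts the $\liminf$ condition. This is essentially the definition of ``positive solution of minimal growth at infinity'' recalled in Section~\ref{sec_set_up}, applied to $\psi = u/\varepsilon$ with $\varepsilon$ chosen from the value $u(o)$; the only thing to check is that $u > 0$ on all of $X$ (so that the ratio $u/G_1$ is well-defined and positive everywhere), which was secured in Step~1 via the Liouville comparison principle. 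A minor point worth spelling out is the pointwise subharmonicity $\mathcal{H}u_\pm \le 0$ for graph Schrödinger operators, for which one invokes the convexity/Green's formula argument (or cites it from the criticality-theory references already in the bibliography).
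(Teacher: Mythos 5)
Your proposal is correct and follows essentially the same route as the paper: take positive parts, feed conditions (a)--(c) into the Liouville comparison principle (Theorem~\ref{thm:comparison}) to get that $u$ (up to sign) is a strictly positive Agmon ground state, then use $V\le 1$ to make $u$ a positive supersolution of $(\Delta+1)\varphi=0$ and invoke the minimal growth of $G_1$ to contradict the $\liminf$ condition. Your Step~2 in fact spells out the scaling/comparison argument ($\varepsilon G_1\le u$ on a finite set, hence everywhere) that the paper leaves implicit, which is a welcome addition rather than a deviation.
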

For the proof, we need the following lemma which is found for $ V\ge 0 $ in \cite[Lemma~1.9]{KellerBook}. Although the proof carries over verbatim to the case of general Schr\"odinger operators, we include it here for the convenience of the reader.

\begin{lemma}\label{lem:v_+} The pointwise maximum of two $ \mathcal{H} $-subharmonic functions is $\mathcal{H}$-subharmonic.
\end{lemma}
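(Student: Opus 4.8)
The plan is to reduce the claim to a purely local, finite computation at each vertex, since $\mathcal{H}$-subharmonicity of a function $w$ is the pointwise condition $\mathcal{H}w(x)\le 0$ for all $x\in X$. Fix two $\mathcal{H}$-subharmonic functions $u_1,u_2\in\mathcal{F}$ and set $w=u_1\vee u_2$; note first that $w\in\mathcal{F}$, because $|w|\le|u_1|+|u_2|$ pointwise and $\mathcal{F}$ is closed under this bound, so $\sum_y b(x,y)|w(y)|<\infty$ for every $x$. It remains to verify $\mathcal{H}w(x)\le 0$ at an arbitrary $x\in X$.

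Without loss of generality assume $w(x)=u_1(x)$, i.e.\ $u_1(x)\ge u_2(x)$. First I would isolate the diagonal-plus-potential part: since $w(x)=u_1(x)$,
\begin{align*}
  \mathcal{H}w(x)-\mathcal{H}u_1(x)
  = \frac{1}{m(x)}\sum_{y\in X} b(x,y)\bigl(u_1(y)-w(y)\bigr).
\end{align*}
Indeed, writing $\mathcal{H}f(x)=\tfrac{1}{m(x)}\sum_y b(x,y)(f(x)-f(y))+\tfrac{V(x)}{m(x)}f(x)$, the terms $f(x)(\tfrac{1}{m(x)}\sum_y b(x,y))+\tfrac{V(x)}{m(x)}f(x)$ depend only on the value at $x$, which is the same for $w$ and $u_1$, so they cancel in the difference, leaving only the sum over $y$ of $b(x,y)\bigl((u_1(x)-u_1(y))-(w(x)-w(y))\bigr)=b(x,y)(u_1(y)-w(y))$ — here I must be careful to note that each such series converges absolutely because $u_1,w\in\mathcal{F}$, so splitting the sum is legitimate.

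Now for every $y$ we have $w(y)=\max\{u_1(y),u_2(y)\}\ge u_1(y)$, hence $u_1(y)-w(y)\le 0$, and since $b(x,y)\ge 0$ and $m(x)>0$, the displayed difference $\mathcal{H}w(x)-\mathcal{H}u_1(x)\le 0$. Combining with $\mathcal{H}u_1(x)\le 0$ (subharmonicity of $u_1$) gives $\mathcal{H}w(x)\le 0$. Since $x$ was arbitrary, $w=u_1\vee u_2$ is $\mathcal{H}$-subharmonic. I do not anticipate a genuine obstacle here; the only point requiring mild care is the convergence bookkeeping (ensuring every rearrangement of the edge-sums is over an absolutely convergent series, which follows from $u_1,u_2\in\mathcal{F}$), and the observation that the potential term drops out of the difference precisely because $w$ and the chosen maximand agree at $x$.
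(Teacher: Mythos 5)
Your proof is correct and follows essentially the same route as the paper's: at a vertex $x$ where the maximum is attained by, say, $u_1$, one uses $w(x)=u_1(x)$ together with $w(y)\ge u_1(y)$ for all $y$ to conclude $\mathcal{H}w(x)\le \mathcal{H}u_1(x)\le 0$ (the paper phrases this as $\nabla_{xy}w\le\nabla_{xy}u_1$, which is the same inequality you derive termwise). Your additional bookkeeping that $w\in\mathcal{F}$ and that the series may be split is a harmless, correct refinement the paper leaves implicit.
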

\begin{proof}
	Let $ u,v $ be $ \mathcal{H} $-subharmonic and $ w=u\vee  v $. Then, for $ x\in X $, we assume $ w(x)=u(x)\ge v(x) $ and we have by case distinction for all $ y\in X $
	\begin{align*}
		\nabla_{xy} w \leq \nabla_{xy} u  .
	\end{align*}
	This in combination with the assumption $w(x)=u(x) $ for this particular $ x $ yields
	\begin{align*}
		\mathcal{H}w (x)= \Delta w(x)+{V(x)}w(x) \leq \Delta u(x)  +{V(x)}u(x)=\mathcal{H}u(x)\leq 0.
	\end{align*}
For $   w(x)=v(x)\ge u(x) $, we obtain analogously $ \mathcal{H}w(x)\leq \mathcal{H}v(x)\leq0 $.
	This settles the claim.
\end{proof}

\begin{proof}[{Proof of Theorem~\ref{t:Landis_general}}]
Let $ u $ be a $ \mathcal{H} $-harmonic function, and assume without loss of generality that $ u_{+}\neq 0 $ (otherwise, consider $ -u$). Then, by  Lemma~\ref{lem:v_+} above, we have
\begin{align*}
	\mathcal{H}u_{+}\le 0.
\end{align*}
We collect the following facts:
\begin{itemize}
	\item [(a)] $\mathcal{H}'$ is critical and $v>0$ is an Agmon ground state.
	\item [(b)] $u_{+}\neq  0$  and $\mathcal{H}u_{+}\leq 0$.
	\item [(c)] {$ b(u_{+}\otimes u_{+})\leq C b'(v\otimes v)$ for some $ C>0 $.} 
\end{itemize}
Thus, by the Liouville comparison theorem, Theorem~\ref{thm:comparison}, we infer that $ \mathcal{H} $ is critical and $ u=u_{+}>0 $ is an Agmon ground state for $ \mathcal{H} $. In particular, $\mathcal{H} u=0$.

Now, since $ V
\leq 1 $ we have 
\begin{align*}
(\Delta+1) u \ge (\Delta+V) u=\mathcal{H} u=0.
\end{align*}
As $ G_{1} $ is a solution of minimal growth at infinity for $ \Delta+1 $, we obtain that $G_1\leq Cu$ in $X$ for some $C>0$ which contradicts our assumption  $\liminf_{x\to\infty} {u(x)}/{G_{1}(x)}=0 $.  Thus, any such $ \mathcal{H} $-harmonic function is trivially zero.
\end{proof}
\begin{rem}
Let us note that Theorem \ref{t:Landis_general} is sharp in the following sense. For $ V=1- [1/G_{1}(o)]1_{o}\leq 1$ with $o\in X$,  the function $ u=G_{1} $ solves 
	\begin{align*}
		(\Delta+V)u= 	(\Delta+1)G_{1}- (1_{o}/G_{1}(o))G_{1}=0.
	\end{align*} 
In view of the decay conditions assumed on $u$ in Theorem \ref{t:Landis_general}, we see the sharpness of our result. 
\end{rem}
\begin{cor}\label{cor:landis1}
	Let $ \mathcal{H}=\Gd+V $ be a positive Schr\"odinger operator associated to a connected graph $ b $ over $ (X,m) $ with potential $ V\le 1$.  Let  $ W\ge 0 $ be such that  $\Gd-W$ is critical and let $v$ be its Agmon ground state.  If  $u\in O(v) $ is a $ \mathcal{H} $-harmonic function such that
	\begin{align*}
		\liminf_{x\to\infty} \frac{|u(x)|}{G_1(x)}=0,
	\end{align*}
	then $ u=0 $.
\end{cor}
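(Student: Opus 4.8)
The plan is to reduce Corollary~\ref{cor:landis1} to the abstract result Theorem~\ref{t:Landis_general} by producing a critical Schr\"odinger operator $\mathcal{H}'$ whose Agmon ground state $v$ dominates $u$ in the combinatorial sense required by hypothesis (c) of that theorem. The natural candidate is $\mathcal{H}'=\Delta-W$ itself, which by assumption is critical with Agmon ground state $v$, and whose underlying graph is $b'=b$ since $\Delta=\Delta_{b,m}$. With this choice the condition $b(|u|\otimes|u|)\le C\, b'(v\otimes v)$ becomes $b(x,y)|u(x)||u(y)|\le C\, b(x,y)v(x)v(y)$ for all $x,y$, which is implied by a pointwise bound $|u(x)|\le C' v(x)$ for all $x\in X$.

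So the first step is to upgrade the hypothesis $u\in O(v)$, which only controls $|u|/v$ near infinity, to a genuine global pointwise bound $|u|\le C' v$ on all of $X$. This is where the assumption $u\in O(v)$ and the strict positivity of $v$ come in: by definition of $O(v)$ there is a finite set $K\Subset X$ and a constant such that $|u(x)|\le C'' v(x)$ for $x\notin K$; on the finite set $K$, both $|u|$ and $v$ take finitely many values and $v>0$ everywhere (it is a strictly positive Agmon ground state), so $\max_{x\in K}|u(x)|/v(x)<\infty$, and taking $C'$ to be the maximum of $C''$ and this finite quantity gives $|u|\le C' v$ on $X$. Then set $C=(C')^2$ to get hypothesis (c).

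With (c) in hand, and since the other decay hypothesis $\liminf_{x\to\infty}|u(x)|/G_1(x)=0$ is assumed verbatim, the hypotheses of Theorem~\ref{t:Landis_general} are all met: $\mathcal{H}=\Delta+V$ with $V\le 1$ and positive $\mathcal{Q}$, $\mathcal{H}'=\Delta-W$ critical with Agmon ground state $v$, and both decay conditions. Theorem~\ref{t:Landis_general} then immediately yields $u=0$, completing the proof.

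I do not expect a serious obstacle here; this corollary is essentially a specialization of the abstract theorem. The only point that needs a word of care is the passage from the asymptotic bound $u\in O(v)$ to the global bound $|u|\le C'v$, and that is handled by the finiteness of $K$ together with $v>0$ on all of $X$. It is worth remarking that one uses here only that the Laplacian $\Delta$ in $\mathcal{H}$ and in $\Delta-W$ is the same operator associated to the same graph $b$; this is what makes the edge-weight comparison in (c) reduce to a pointwise inequality on vertices.
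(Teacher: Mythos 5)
Your proposal is correct and follows the same route as the paper, which simply invokes Theorem~\ref{t:Landis_general} with $\mathcal{H}'=\Delta-W$ and $b'=b$; your extra step upgrading $u\in O(v)$ to a global bound $|u|\le C'v$ (using finiteness of the exceptional set and strict positivity of the Agmon ground state $v$) is exactly the detail the paper leaves implicit, and it is sound.
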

\begin{proof}
The statement follows directly from the theorem above.
\end{proof}

Next, we present two corollaries where the Agmon ground state $ v $ associated to a critical Hardy weight $W$ in the above corollary can be replaced by certain functions of the Green function of the Laplacian.

For the first corollary, recall that a function $ \phi: X\to (0,\infty)$ is said to be \emph{proper} if for any compact set $ I \subset (0,\infty) $ we have that 	$ \phi^{-1}(I)\subseteq X $ is compact, i.e., finite. Furthermore, we say that $ \phi$ is of \emph{bounded oscillation} if
\begin{align*}
	\sup_{x\sim y}\frac{\phi(x)}{\phi(y)}<\infty.
\end{align*}
In \cite[Theorem~1.1]{KellerPinchoverPogorzelski_optimal} it is shown for $\mathcal{H}=\Gd$ that a strictly positive $ \mathcal{H} $-superharmonic function $ \phi $
which is proper, of bounded oscillation and $ \mathcal{H} $-harmonic outside of a compact set gives rise to a critical Hardy weight
\begin{align*}
	W= \frac{\mathcal{H}\phi^{1/2}}{\phi^{1/2}}
\end{align*}
for $\mathcal{H}$. Indeed, $ W$ is even null-critical (in other words, $W$ is an optimal Hardy weight in the sense of \cite{KellerPinchoverPogorzelski_optimal}), i.e., the Agmon ground state ${v= \phi^{1/2}}>0 $ of the critical operator $ \mathcal{H}-W $ is not in $ \ell^{2}(X,Wm) $. 

Observe that whenever there is a strictly positive proper function of bounded oscillation, then the graph must be \emph{locally finite}, i.e., $ \#\{y\in X\mid x\sim y\}<\infty $ for all $ x\in X $. 

The next corollary allows under a properness and bounded oscillation assumptions on $ G_{0} $ to take  ${v}=G_{0}^{1/2}$ in the above {corollary}.
\begin{cor}\label{cor:landis2}
	Let $ \mathcal{H}\!=\!\Gd+V$ be a positive Schr\"odinger operator on the connected graph $ b $ over $ (X,m) $ with potential $ V \!\le\! 1$. Assume further that $ \Delta  $ is subcritical and  $ G_{0} $, the minimal positive Green function of $\Delta$,  is proper and of bounded oscillation.  Let  $ u \!\in\! O(G_{0}^{1/2}) $ be  a $ \mathcal{H} $-harmonic function satisfying 
	\begin{align*}
		\liminf_{x\to\infty} \frac{|u(x)|}{G_{1}(x)}=0.
	\end{align*}
Then $ u	=0 $.
\end{cor}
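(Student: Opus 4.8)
The plan is to reduce Corollary~\ref{cor:landis2} to Corollary~\ref{cor:landis1} by exhibiting the appropriate critical Hardy weight and its Agmon ground state. First I would invoke \cite[Theorem~1.1]{KellerPinchoverPogorzelski_optimal} with $\mathcal{H}=\Delta$ and $\phi=G_{0}$: by hypothesis $\Delta$ is subcritical and $G_{0}$ is the minimal positive Green function of $\Delta$ at $o$, so $G_{0}$ is strictly positive, $\Delta$-harmonic on $X\setminus\{o\}$, and by assumption proper and of bounded oscillation. Hence the function
\begin{align*}
  W=\frac{\Delta\, G_{0}^{1/2}}{G_{0}^{1/2}}
\end{align*}
is a (null-)critical Hardy weight for $\Delta$, meaning $\Delta-W$ is critical, and its Agmon ground state is $v:=G_{0}^{1/2}>0$. (One should note here that the existence of a strictly positive proper function of bounded oscillation forces $b$ to be locally finite, as remarked in the excerpt, so there is no issue with the setting.)

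Having produced such a $W\ge 0$ with $\Delta-W$ critical and Agmon ground state $v=G_{0}^{1/2}$, the corollary now follows verbatim from Corollary~\ref{cor:landis1}: the hypothesis $u\in O(G_{0}^{1/2})$ is exactly $u\in O(v)$, the hypothesis $\liminf_{x\to\infty}|u(x)|/G_{1}(x)=0$ is unchanged, $\mathcal{H}=\Delta+V$ with $V\le 1$ and positive energy functional is assumed, and $u$ is $\mathcal{H}$-harmonic. Therefore Corollary~\ref{cor:landis1} gives $u=0$.

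The only genuine content is checking that $G_{0}$ meets the hypotheses of \cite[Theorem~1.1]{KellerPinchoverPogorzelski_optimal}; but subcriticality of $\Delta$ and connectedness of $b$ already guarantee (by \cite{KellerBook}) that $G_{0}$ is finite and strictly positive everywhere, and $\Delta G_{0}=1_{o}$ gives that $G_{0}$ is $\Delta$-harmonic on $X\setminus\{o\}$, while properness and bounded oscillation are assumed outright. So there is essentially no obstacle: the corollary is a straightforward specialization of Corollary~\ref{cor:landis1} obtained by taking $W$ to be the optimal Hardy weight associated with $\phi=G_{0}$, exactly as was announced in the introduction in the discussion of $\mathbb{Z}^{d}$.

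\begin{proof}
Since $\Delta$ is subcritical and $b$ is connected, the minimal positive Green function $G_{0}$ of $\Delta$ at $o$ is a strictly positive function in $\mathcal{F}$ satisfying $\Delta G_{0}=1_{o}$; in particular $G_{0}$ is $\Delta$-harmonic on $X\setminus\{o\}$. By assumption $G_{0}$ is proper and of bounded oscillation. Applying \cite[Theorem~1.1]{KellerPinchoverPogorzelski_optimal} with $\mathcal{H}=\Delta$ and $\phi=G_{0}$, we obtain that
\begin{align*}
  W=\frac{\Delta\, G_{0}^{1/2}}{G_{0}^{1/2}}\ge 0
\end{align*}
is a critical Hardy weight for $\Delta$, i.e.\ $\Delta-W$ is critical, with Agmon ground state $v=G_{0}^{1/2}>0$. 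Now $u\in O(G_{0}^{1/2})=O(v)$ is an $\mathcal{H}$-harmonic function with $\liminf_{x\to\infty}|u(x)|/G_{1}(x)=0$, so Corollary~\ref{cor:landis1} applies and yields $u=0$.
\end{proof}
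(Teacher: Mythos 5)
Your proposal is correct and coincides with the paper's own proof: both apply the supersolution construction of \cite[Theorem~1.1]{KellerPinchoverPogorzelski_optimal} to $\phi=G_{0}$ to obtain the critical Hardy weight $W=\Delta G_{0}^{1/2}/G_{0}^{1/2}$ with Agmon ground state $v=G_{0}^{1/2}$, and then invoke Corollary~\ref{cor:landis1}. Your additional verification of the hypotheses on $G_{0}$ (strict positivity, harmonicity off $\{o\}$, properness, bounded oscillation) is a welcome elaboration of what the paper leaves implicit.
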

\begin{proof}
	By the supersolution construction \cite[Theorem~1.1]{KellerPinchoverPogorzelski_optimal}, the function
	\begin{align*}
		W=\frac{\Delta G_{0}^{1/2}}{G_{0}^{1/2}}
	\end{align*}
	is a critical Hardy weight for $ \Delta $ and the associated Agmon ground state of $\Gd-W$ is  $v= G_{0}^{1/2} $. {Hence, the statement follows now directly from Corollary ~\ref{cor:landis1} above.}
\end{proof}

Next, we give a corollary, when no explicit critical Hardy weight is available. In this case, we make a stronger decay assumption on $u$ comparing to the corresponding one in the previous corollary.

\begin{cor}\label{cor:landis3}
	Let $ \mathcal{H}=\Gd+V $ be a positive Schr\"odinger operator associated to a connected graph $ b $ over $ (X,m) $ with potential $ V\le 1$.   Assume that  $ (\Delta +\alpha)$   is subcritical for some $ \alpha\in\R$ and $u$ is  a $ \mathcal{H} $-harmonic function  satisfying $u\in O(G_{\alpha}) $, and  
	\begin{align*}
		\liminf_{x\to\infty} \frac{|u(x)|}{G_1(x)}=0.
	\end{align*}
	Then $ u=0 $.
\end{cor}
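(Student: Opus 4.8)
The plan is to reduce Corollary~\ref{cor:landis3} to Corollary~\ref{cor:landis1} by exhibiting a critical Hardy weight whose Agmon ground state dominates $G_\ga$. The key observation is that $G_\ga$ itself is a positive solution of minimal growth at infinity for $\Delta+\ga$, hence it is, outside the single vertex $o$, an Agmon ground state of the operator $\Delta+\ga$. So I would first treat $\mathcal{H}'=\Delta+\ga$ (with the understanding that when $\ga=0$ we need $\Delta$ subcritical, which is exactly the hypothesis $G_0<\infty$) as a candidate for the comparison operator in Theorem~\ref{t:Landis_general}, with $v=G_\ga$. The only subtlety is that $G_\ga$ solves $(\Delta+\ga)G_\ga=1_o\neq 0$, so it is not globally $\mathcal{H}'$-harmonic; but one can modify the potential on the finite set $\{o\}$, replacing $\ga$ by $\ga'=\ga-[m(o)/G_\ga(o)]1_o$, so that $\mathcal{H}'=\Delta+\ga'$ is genuinely $\mathcal{H}'$-harmonic with respect to $G_\ga$, has positive energy functional (energy changes only by a finite rank perturbation and $G_\ga>0$ is a global positive harmonic function), and is critical (it has a positive solution of minimal growth at infinity which is globally harmonic — this is exactly the characterization of criticality recalled in the excerpt, cf.\ \cite[Theorem~5.16]{KellerPinchoverPogorzelski} together with the uniqueness characterization).

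Second, I would verify hypothesis (c) of Theorem~\ref{t:Landis_general}, namely $b(|u|\otimes|u|)\leq C\,b(v\otimes v)$ with $b'=b$ and $v=G_\ga$. Since $u\in O(G_\ga)$ means $|u(x)|\leq C' G_\ga(x)$ for all $x$ outside some finite set $K$, and both $|u|$ and $G_\ga$ are everywhere finite and strictly positive (the latter on all of $X$ by connectedness, the former possibly zero but then trivially dominated), on the finite set $K$ we can enlarge $C'$ to absorb the finitely many ratios $|u(x)|/G_\ga(x)$. Hence $|u|\leq C'' G_\ga$ pointwise on $X$, which gives $b(|u|\otimes|u|)(x,y)=b(x,y)|u(x)||u(y)|\leq (C'')^2 b(x,y)G_\ga(x)G_\ga(y)=(C'')^2 b(v\otimes v)(x,y)$. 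The decay hypothesis $\liminf_{x\to\infty}|u(x)|/G_1(x)=0$ is exactly the second hypothesis of Theorem~\ref{t:Landis_general}, so the conclusion $u=0$ follows immediately.

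The main obstacle — really the only non-routine point — is justifying that the perturbed operator $\mathcal{H}'=\Delta+\ga'$ with $\ga'=\ga-[m(o)/G_\ga(o)]1_o$ is \emph{critical} with Agmon ground state $G_\ga$, rather than merely positive. For $\ga>0$ this is clean: $G_\ga$ is the minimal positive $(\Delta+\ga)$-superharmonic function (the Green function at $o$), and after the finite-rank modification making it harmonic it remains the unique positive superharmonic function up to scaling, which is the criticality criterion from \cite[Theorem~5.3, Theorem~5.16]{KellerPinchoverPogorzelski}. For $\ga=0$ one uses the hypothesis $G_0<\infty$ (equivalently $\Delta$ subcritical with connected graph, so $G_0$ is finite and strictly positive everywhere) and the analogous statement that $G_0$ is the minimal positive superharmonic function for $\Delta$; the modification by $-[m(o)/G_0(o)]1_o$ turns it into a critical operator with ground state $G_0$. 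With this in hand the statement is a one-line corollary of Theorem~\ref{t:Landis_general}. Alternatively, and perhaps more cleanly, one can invoke Corollary~\ref{cor:landis1} directly: if a critical Hardy weight $W$ for $\Delta$ with ground state $v$ satisfying $v\asymp G_\ga$ is available one is done, but since in general no such explicit $W$ is at hand, the direct route through the comparison principle with $\mathcal{H}'=\Delta+\ga'$ as above is the natural argument.

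\begin{proof}
Assume first $u_+\neq 0$; otherwise replace $u$ by $-u$. Since $G_\ga<\infty$, the Green function $G_\ga$ is a strictly positive function on all of $X$, finite everywhere, and by \cite[Theorem~5.16]{KellerPinchoverPogorzelski} (and for $\ga=0$ the connectedness of $b$) it is the minimal positive $(\Delta+\ga)$-superharmonic function and a positive solution of minimal growth at infinity for $\Delta+\ga$. Put
\begin{align*}
V' = \ga - \frac{m(o)}{G_\ga(o)}\,1_o, \qquad \mathcal{H}'=\Delta+V'.
\end{align*}
Then $\mathcal{H}'G_\ga = (\Delta+\ga)G_\ga - \frac{m(o)}{G_\ga(o)}\cdot\frac{1}{m(o)}G_\ga(o)\,1_o = 1_o-1_o = 0$, so $G_\ga>0$ is $\mathcal{H}'$-harmonic on $X$. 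As $\mathcal{H}'$ admits the global positive harmonic function $G_\ga$, it is positive, and since $G_\ga$ is (up to the finite-rank change of potential) the unique positive $\mathcal{H}'$-superharmonic function up to scalar multiples, $\mathcal{H}'$ is critical with Agmon ground state $v:=G_\ga$, cf.\ \cite[Theorem~5.3]{KellerPinchoverPogorzelski}.

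Next, $u\in O(G_\ga)$ gives a finite $K\Subset X$ and $C'>0$ with $|u(x)|\le C' G_\ga(x)$ for $x\in X\setminus K$. Since $K$ is finite and $G_\ga>0$ on $K$, enlarging $C'$ to $C'':=\max\{C',\max_{x\in K}|u(x)|/G_\ga(x)\}$ yields $|u|\le C'' v$ on all of $X$. Hence, with $b'=b$,
\begin{align*}
b(|u|\otimes|u|)(x,y)=b(x,y)|u(x)||u(y)|\le (C'')^2 b(x,y)v(x)v(y)=(C'')^2\, b(v\otimes v)(x,y)
\end{align*}
for all $x,y\in X$. Together with the hypothesis $\liminf_{x\to\infty}|u(x)|/G_1(x)=0$, the assumptions of Theorem~\ref{t:Landis_general} are satisfied with this choice of $\mathcal{H}'$ and $v$. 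Therefore $u=0$.
\end{proof}
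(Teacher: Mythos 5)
Your proof is correct and follows essentially the same route as the paper: reduce to Theorem~\ref{t:Landis_general} by perturbing $\Delta+\alpha$ at the single vertex $o$ so as to obtain a critical operator whose Agmon ground state is comparable to $G_\alpha$. The only difference is in how that critical operator is produced. The paper invokes \cite[Lemma~4.4]{Das_etal} to get \emph{some} critical coupling constant $C_o$ and then only needs $O(v)=O(G_\alpha)$, which follows because both $v$ and $G_\alpha$ are positive solutions of minimal growth at infinity for operators agreeing outside $\{o\}$. You instead identify the critical constant explicitly as $m(o)/G_\alpha(o)$ (the same normalization the paper itself uses in the sharpness remark after Theorem~\ref{t:Landis_general}) and take $v=G_\alpha$ exactly; this buys a cleaner comparison at the price of having to \emph{prove} criticality of $\mathcal{H}'=\Delta+\alpha-[m(o)/G_\alpha(o)]1_o$ rather than citing it. That is the one soft spot: your justification ("$G_\alpha$ is the unique positive $\mathcal{H}'$-superharmonic function up to scalar multiples") asserts the conclusion, since what your minimality argument actually shows is that $G_\alpha$ is the \emph{minimal} positive $\mathcal{H}'$-superharmonic function normalized at $o$, and minimality alone does not imply uniqueness (subcritical operators also possess minimal positive supersolutions). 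The clean fix is the one already implicit in your plan: $G_\alpha$ is a globally positive $\mathcal{H}'$-harmonic function which is of minimal growth at infinity for $\mathcal{H}'$ (minimal growth is a property of the operator outside a compact set, where $\mathcal{H}'$ and $\Delta+\alpha$ coincide, and $G_\alpha$ has minimal growth for $\Delta+\alpha$ by \cite[Theorem~2.5]{F24}); hence it is an Agmon ground state, and the existence of an Agmon ground state is equivalent to criticality by \cite[Theorem~5.3]{KellerPinchoverPogorzelski}. With that substitution your argument is complete; the verification of hypothesis (c) on the finite exceptional set and the positivity of $\mathcal{Q}'$ via the Allegretto--Piepenbrink theorem are both handled correctly.
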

\begin{proof}
Fix such an $\ga$. Let $ o\in X $ and choose $ C_{o}>0 $ such that $\mathcal{H}':= \Delta+\alpha-C_{o}1_{o} $ is critical (cf. \cite[Lemma~4.4]{Das_etal}) with an Agmon ground state $ v>0 $. {Clearly, $ O(v)=O(G_{\alpha})$ since $ G_{\ga}$ is a solution of minimal growth at infinity for $\Delta+\alpha$.  Hence, $ u\in O(G_{\alpha}) = O(v)$.} Thus, the statement follows from Theorem~\ref{t:Landis_general}. 
\end{proof}

Finally, we formulate a corollary to relate our result with the summability criteria given in \cite{FBRS24}.

\begin{cor} \label{Cor:FBR}	Let $ \mathcal{H} $ be a positive Schr\"odinger operator associated to a connected graph $ b $ over $ (X,m) $ with potential $ V\le 1$. Every $ \mathcal{H} $-harmonic function $ u\in \ell^{2}(X, G_{1}^{-2}) $ 
is trivial.
\end{cor}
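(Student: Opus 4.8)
The plan is to obtain this as a consequence of Corollary~\ref{cor:landis3} applied with $\alpha = 1$. Recall that the energy functional of $\Delta$ is positive on $C_{c}(X)$, so $\Delta + 1$ is subcritical and hence $G_{1} = (L+1)^{-1}1_{o}$ is a strictly positive function taking only finite values on $X$; in particular the requirement ``$G_{\alpha} < \infty$'' in Corollary~\ref{cor:landis3} is met for $\alpha = 1$.

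It remains to extract the two decay conditions of Corollary~\ref{cor:landis3} from the summability hypothesis. By definition, $u \in \ell^{2}(X, G_{1}^{-2})$ means $\sum_{x \in X} u(x)^{2}/G_{1}(x)^{2} < \infty$. Since this is a convergent series of nonnegative terms over a countable set, for every $\varepsilon > 0$ there is a finite set $K \Subset X$ with $u(x)^{2}/G_{1}(x)^{2} < \varepsilon$ for all $x \in X \setminus K$; in the topology of the one-point compactification $X \cup \{\infty\}$ this says exactly that $|u(x)|/G_{1}(x) \to 0$ as $x \to \infty$. Consequently $\limsup_{x\to\infty}|u(x)|/G_{1}(x) = 0 < \infty$, i.e.\ $u \in O(G_{1})$, and also $\liminf_{x\to\infty}|u(x)|/G_{1}(x) = 0$.

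Both hypotheses of Corollary~\ref{cor:landis3} with $\alpha = 1$ are therefore satisfied, and we conclude $u = 0$. There is no essential difficulty here: the only point requiring a word of care is that $\ell^{2}$-summability over a countable discrete set forces vanishing at infinity in the one-point compactification, which is immediate from the smallness of tails of convergent series. Alternatively, one can avoid invoking Corollary~\ref{cor:landis3} and apply Theorem~\ref{t:Landis_general} directly, taking $\mathcal{H}' := \Delta + 1 - C_{o} 1_{o}$ with $C_{o} > 0$ chosen (as in the proof of Corollary~\ref{cor:landis3}) so that $\mathcal{H}'$ is critical with Agmon ground state $v > 0$; then $O(v) = O(G_{1})$ gives $|u| \le C v$ pointwise, hence $b(|u| \otimes |u|) \le C^{2}\, b(v \otimes v)$, and the $\liminf$ condition holds as above.
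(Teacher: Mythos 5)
Your proof is correct and follows essentially the same route as the paper: the paper likewise observes that the summability of $G_{1}^{-2}|u|^{2}$ forces $u\in o(G_{1})$, hence both decay hypotheses of Corollary~\ref{cor:landis3}, and concludes. Your extra remarks (finiteness of $G_{1}$, and the tail argument for vanishing at infinity) are accurate elaborations of steps the paper leaves implicit.
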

\begin{proof}
	The summability of $ G_{1}^{-2}|u|^{2} $ implies $u\in  o(G_{1}) $, i.e. $ \lim_{x\to\infty}|u(x)|/G_{1}(x)=0 $. Thus, the statement follows from Corollary~\ref{cor:landis3} above.
\end{proof}

\begin{rem}[Exterior domains]
It is not hard to derive the Landis-type results on exterior domains, i.e., on sets $ \tilde X=X\setminus K $ for $ K\Subset X $. Let $ b $ be a graph over $ (X,m) $ and $ V $ be a given potential. Assume the form $ {\mathcal{Q}} $ is positive on $ C_{c}(\tilde X) $. Then, the operator acting as 
\begin{align*}
	\tilde{\mathcal{H}}= \tilde\Delta+D+ V_{\tilde X}
\end{align*}
where $\tilde \Delta=\Delta_{b\vert_{\tilde X\times \tilde X},m\vert_{\tilde X}}$, $ V_{\tilde X}=V\vert_{\tilde X} $ and $ D: \tilde X\to[0,\infty) $
\begin{align*}
	D(x) = \frac{1}{m(x)}\sum_{y\in K} b(x,y)
\end{align*}
is positive on $\tilde{X}$. Note that, the restriction of the graph $ b $ to $\tilde X \!\times\! \tilde X $ is not necessarily connected. However, one can deal with every connected component separately.  Furthermore, if $ K $ is non-empty, then $ D $ does not vanish identically on any connected component of $ \tilde X $ since the original graph is connected. Thus, the operator $\tilde\Delta + D  $ is always subcritical and we denote the Green's function of $ \tilde \Delta + D  + \alpha $ for $ \alpha \ge 0 $ by $ \tilde G_{\alpha} $. Then  one can reproduce all results for $ \tilde {\mathcal{H} }$-harmonic functions on $ \tilde X $  by replacing $ G_{\alpha} $ with $ \tilde G_{\alpha} $. Furthermore, note that if the graph is locally finite, then $ D $ is finitely supported. In this case, every positive solution of minimal growth at infinity  in $ \tilde X $  is comparable to a positive solution of minimal growth at infinity in $ X $.\\
In a similar fashion, we can replace $ \Delta $ on $ X $ by an arbitrary operator $ \Delta+D $ on $ X $ for a positive potential $ D:X\to[0,\infty) $ and obtain  analogous results.
\end{rem}

\begin{rem}
	{In \cite[Section~6]{KSW} the authors suggest a strategy to remove the positivity assumption on a Schr\"odinger operator $\mathcal{H}$ with a bounded real valued potential $V$. Given  the  Landis an $\mathcal{H}$-harmonic $u$ on $\mathbb{R}^d $, they add a dimension and consider a separated solution $w(x,x_{d+1})= u(x)v(x_{d+1})$ on $\mathbb{R}^{d+1}=\mathbb{R}^d \times \R$ satisfying 
	$$\tilde{\mathcal{H}}w=(\Delta_{\R^{d+1}}+V+\lambda^{2})w=0\qquad \mbox{on } \R^{d+1},$$
	where $\gl$ is large enough constant such that $\tilde{\mathcal{H}}$ is a positive operator and $v$ is an explicit one-dimensional function satisfying $v''+\lambda^{2} v=0$. }
	
	{An analogous strategy works on $\Z^d$ as well. One may anticipate proving a Landis-type result for a (not necessarily positive) Schr\"odinger operator $\Delta_{\mathbb{Z}^d} +V$ on $\mathbb{Z}^d $ with a bounded real valued potential $V$, by applying our approach to the positive Schr\"odinger operator $\Delta_{\Z^{d+1}} +V+\lambda^{2}$ on $\Z^{d+1}$ with its separated solution $w$. }
		
		{However, in the first step of the proof Theorem~\ref{t:Landis_general}, we use the Liouville comparison principle to ensure that $w>0$. If we could prove that $w>0$, it would follow that $u$ has a definite sign. In light of the Agmon-Allegretto-Piepenbrink theorem this would imply  that the original operator $\Delta +V$ is, in fact, a positive operator on $\mathbb{Z}^d $.}
	\end{rem}
\section{The Euclidean lattice and regular trees}\label{sec:Examples}
In this section we apply the abstract results of the previous section
to graphs where the asymptotics of the Green functions for the relevant Schr\"odinger operators are known. We first consider the Euclidean lattice and then the case of trees.

\subsection{Euclidean lattice}

In the continuum setting of $ \mathbb{R}^{d} $ the Green function $ G_{\mathbb{R}^{d},1} $ of  $ \Delta_{\mathbb{R}^{d}} +1$, where $ \Delta_{\mathbb{R}^{d}}=-\sum_{j=1}^{d}\partial_{j}^{2} $ has the asymptotics \cite[Appendix]{DP24}
\begin{align*}
	G_{\mathbb{R}^{d},1}(x) \asymp |x|^{(1-d)/2}\mathrm{e}^{-|x|} \qquad \mbox{as } x\to\infty,
\end{align*}
For the Euclidean lattice, the situation is substantially  more complicated as $ \mathbb{Z}^{d} $ lacks the spherical symmetry, and therefore does not allow for the reduction to a  one-dimensional problem. However, the asymptotics of {$G_1$} are still rather well understood by now \cite{Michta,Molchanov} and we will take advantage of these results.

To be more precise let $ X=\mathbb{Z}^{d} $. The Laplacian $ \Delta $ with standard weights, i.e.,  $ b(x,y)=1$ if $|x-y|=1 $ and 0 otherwise for $ x,y\in \mathbb{Z}^{d} $ and $ m= 1 $, acts as
\begin{align*}
	\Delta f(x)=\sum_{|y-x|=1}(f(x)-f(y))
\end{align*}  
on all functions $f:X\to\mathbb{R}  $. Furthermore, we denote the Euclidean norm on $ \mathbb{Z}^{d} $ by $ |x|=(|x_{1}|^{2}+\ldots +|x_{d}|^{2})^{1/2} $. With slight abuse of notation we also write $ x $ for the identity function  on $ \mathbb{Z}^{^d} $  and $ x_{i} $ for the projection on the $ i $-th component, $ i=1,\ldots,d $.

We prove the  following theorem which is a direct consequence of Theorem~\ref{t:Landis_general} above and the asymptotics obtained in \cite{Michta,Molchanov}.

\begin{thm}\label{mainZd} Let  $u$ be a  $ \mathcal{H} $-harmonic function of a positive Schr\"odinger operator $ \mathcal{H} =\Delta+V $ on $\mathbb{Z}^{d}  $ with $ V\leq 1 $. If 
\begin{itemize}
	\item [(a)] $ u $ is bounded for $ d=1,2 $, 
	\item [(b)] $ u $ satisfies $  u\in O \left(|x|^{(2-d)/2}\right) $ for $ d\ge3 $,
 \end{itemize}	
	and 
	\begin{align*}
		    \liminf_{x\to\infty} |u(x)| |x|^{(d-1)/2}\mathrm{e}^{|x|}=0,  
	\end{align*}
then $ u=0 $.
\end{thm}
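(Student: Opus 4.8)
The plan is to reduce this statement to the abstract Theorem~\ref{t:Landis_general} (or more directly to Corollary~\ref{cor:landis2}) by supplying the two inputs it requires: an ``a priori'' comparison of $|u|$ with a slowly decaying Agmon ground state, encoded through the graph tensor products, and the decay hypothesis $\liminf |u(x)|/G_1(x)=0$. For the decay hypothesis, I would use the known asymptotics of the lattice Green function of $\Delta+1$ from \cite{Michta,Molchanov}, namely $G_1(x)\asymp |x|^{(1-d)/2}\mathrm{e}^{-|x|}$ as $x\to\infty$; then $\liminf_{x\to\infty}|u(x)|/G_1(x)=0$ is equivalent, up to the multiplicative constant hidden in $\asymp$, to $\liminf_{x\to\infty}|u(x)||x|^{(d-1)/2}\mathrm{e}^{|x|}=0$, which is exactly the hypothesis of the theorem. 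So that half is immediate.

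The substance is the ``a priori estimate.'' First I would treat $d\ge 3$. Here $\Delta$ on $\mathbb{Z}^d$ is subcritical and its minimal positive Green function $G_0$ at the origin satisfies $G_0(x)\asymp |x|^{2-d}$ (e.g.\ \cite{Woess}), so $G_0$ is proper and of bounded oscillation on the locally finite graph $\mathbb{Z}^d$. The supersolution construction of \cite{KellerPinchoverPogorzelski_optimal} then gives that $W=\Delta G_0^{1/2}/G_0^{1/2}$ is a critical Hardy weight for $\Delta$ with Agmon ground state $v=G_0^{1/2}\asymp |x|^{(2-d)/2}$. Since we assumed $u\in O(|x|^{(2-d)/2})=O(G_0^{1/2})=O(v)$, Corollary~\ref{cor:landis2} applies verbatim and yields $u=0$. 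For $d=1,2$ the Laplacian is critical (recurrent), so $G_0\equiv\infty$ and the above route is unavailable; instead I would take $\mathcal{H}'=\Delta+\alpha-C_o 1_o$ with $\alpha>0$ small and $C_o>0$ chosen (via \cite[Lemma~4.4]{Das_etal}) so that $\mathcal{H}'$ is critical, with Agmon ground state $v>0$. Since $G_\alpha$ is a positive solution of minimal growth at infinity for $\Delta+\alpha$, we have $O(v)=O(G_\alpha)$; and the asymptotics of $G_\alpha$ on $\mathbb{Z}^d$ for $d=1,2$ give exponential decay, in particular $G_\alpha$ is bounded, so a bounded $\mathcal{H}$-harmonic function lies in $O(G_\alpha)=O(v)$. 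Then Corollary~\ref{cor:landis3} (equivalently Theorem~\ref{t:Landis_general}) applies. In all cases one must translate the pointwise bound $|u|\in O(v)$ into condition (c) of Theorem~\ref{t:Landis_general}, $b(|u|\otimes|u|)\le C\,b'(v\otimes v)$: since here $b=b'$ is the standard nearest-neighbour structure on $\mathbb{Z}^d$, and $v$ has bounded oscillation, $|u(x)|\le C' v(x)$ for all large $x$ (enlarging $C'$ to absorb the finitely many remaining vertices) gives $|u(x)||u(y)|\le (C')^2 v(x)v(y)$ whenever $x\sim y$, hence $b(|u|\otimes|u|)\le (C')^2 b(v\otimes v)$ as required.

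The main obstacle is the $d=1,2$ case: one cannot use the null-critical Hardy-weight machinery directly because $\Delta$ is recurrent, so the argument must go through a perturbed critical operator $\Delta+\alpha-C_o 1_o$, and one has to know that its ground state is comparable to $G_\alpha$ (which is the content of ``minimal growth at infinity'') and that $G_\alpha$ is bounded. The latter needs the (standard, but citation-dependent) fact that the resolvent kernel $(\,L+\alpha)^{-1}(o,\cdot)$ on $\mathbb{Z}$ and $\mathbb{Z}^2$ decays exponentially; with that in hand the boundedness hypothesis on $u$ is exactly what is needed. A secondary point requiring care is the passage between the multiplicative-constant conventions: the hypothesis is stated with $|x|^{(d-1)/2}\mathrm{e}^{|x|}$, matching $G_1^{-1}$ only up to the constant in $\asymp$, but since $\liminf(\,\cdot\,)=0$ is insensitive to positive multiplicative constants this causes no difficulty.
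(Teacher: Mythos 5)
Your overall architecture (reduce to Corollaries~\ref{cor:landis1}/\ref{cor:landis2}/\ref{cor:landis3}, split the work into an a priori bound by an Agmon ground state plus the decay bound against $G_1$) is the right one and agrees with the paper for $d\ge 3$. But there are two genuine problems. First, the step you call ``immediate'' is not: the asymptotic $G_1(x)\asymp |x|^{(1-d)/2}\mathrm{e}^{-|x|}$ is \emph{false} on $\mathbb{Z}^d$. The results of \cite{Michta,Molchanov} give $G_1(x)\asymp |x|_a^{(1-d)/2}\mathrm{e}^{-m_a|x|_a}$ in a direction-dependent norm $|\cdot|_a$ with $m_a=\cosh^{-1}(3/2)\approx 0.962$; along the coordinate axes one has $|x|_a=|x|$ and hence $G_1(ne_j)\sim n^{(1-d)/2}\mathrm{e}^{-0.962\,n}$, which is much larger than $n^{(1-d)/2}\mathrm{e}^{-n}$, so your two-sided $\asymp$ fails. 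What the theorem actually needs is only the lower bound $G_1(x)\ge c\,|x|^{(1-d)/2}\mathrm{e}^{-|x|}$, equivalently $m_a|x|_a\le |x|$ for $a^2=1/(2d)$. This does hold, but it does not follow from the elementary norm comparisons $|x|\le|x|_a\le\|x\|_1$ (those only give $m_a|x|_a\le 0.962\sqrt{d}\,|x|$, which is too weak for $d\ge 2$); establishing $m_a|x|_a\le\sqrt{2a^2d}\,|x|$ is the content of the paper's Appendix (Lemma~\ref{l:estimate}, a Lagrange-multiplier computation). So this half of your argument has a real gap, not just a constant-chasing issue.

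Second, your treatment of $d=1,2$ is incorrect. If $G_\alpha$ decays exponentially, then $u\in O(G_\alpha)$ means $|u|\le CG_\alpha$ near infinity, i.e.\ $u$ must itself decay exponentially; a merely bounded function (say $u\equiv 1$) satisfies $|u(x)|/G_\alpha(x)\to\infty$ and is \emph{not} in $O(G_\alpha)$. You have the containment backwards, so the route through $\mathcal{H}'=\Delta+\alpha-C_o1_o$ and Corollary~\ref{cor:landis3} does not apply to bounded $u$. The correct (and simpler) argument, which is the one the paper uses, is that for $d=1,2$ the operator $\Delta$ on $\mathbb{Z}^d$ is itself critical (recurrent), with Agmon ground state $v\equiv 1$; boundedness of $u$ is then exactly $u\in O(v)$, and Corollary~\ref{cor:landis1} (with $W=0$) finishes the proof. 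Your closing remark on converting $u\in O(v)$ into condition (c) of Theorem~\ref{t:Landis_general} via bounded oscillation of $v$ is fine.
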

For the proof, we discuss the decay of $ G_{\alpha} $ on $\mathbb{Z}^d$ which was studied in the literature. 	In \cite{Michta} it is shown that the function
	$$  C_{a} = \left(\frac{1}{2d}\Delta +a^{2}\right)  ^{-1}1_{0} \qquad \mbox{on } \   \mathbb{Z}^{d}, \ a \in \R $$
	 has the asymptotics
	$      {m_{a}^{(d-3)/2}}{|x|_{a}^{-(d-1)/2}}\mathrm{e}^{-m_{a}|x|_{a}}  $,
	where
	$   m_a=\cosh^{-1}\left(1+da^{2}\right) $
	and
	$$   |x|_a :=\frac{1}{m_a}\sum_{i=1}^{d}x_{i}\sinh^{-1}\left(x_{i}r(x)\right) $$
	with $ r(x) $ being the unique solution to
	$$   \frac{1}{d}\sum_{i=1}^{d}\sqrt{1+x_{i}^{2}r(x)^{2}}=1+a^{2}. $$
	Note that a unique solution exists as the left hand side is strictly monotone in $ r\ge 0 $.
	In \cite[Theorem 3.2]{Molchanov} it is shown with a somewhat different notation that
	\begin{align*} 	
		 C_{a}(x)= \frac{m_{a}^{(d-3)/2}}{{|x|_{a}^{(d-1)/2}}}\mathrm{e}^{-m_{a}|x|_{a}}(1+o(1)).
	\end{align*}
	The reason we cite   \cite[Theorem~3.2]{Molchanov} for the asymptotics over  \cite[Theorem~1.3]{Michta} is that the  authors of \cite{Michta} consider directional asymptotics $ nx $ for $ n\to\infty $ for fixed $ x $ and variable $ a $ rather than estimates which hold for all $ x $ but for fixed $ a $. However, the benefit of the considerations of \cite{Michta} is that they identify $ |\cdot|_{a} $ as a norm. 	To relate the results of  \cite{Molchanov} and  \cite{Michta} to our situation,
	we  observe that for $ \alpha>0 $ 
	$$   G_{\alpha}=  \left(\Delta +\alpha\right)^{-1}1_{0} =\frac{1}{2d} \left(\frac{1}{2d}\Delta +\frac{\alpha}{2d}\right)^{-1}1_{0} =\frac{1}{2d} C_{a}$$
	with
	$   a^{2} ={\alpha}/{2d}.$
	
\begin{proof}[{Proof of Theorem~\ref{mainZd}}]
	For  $ a^2<2 $, we have with  $ a^{2} ={\alpha}/({2d}) $ that $ \alpha< 4d $. Thus, we have, confer Lemma~\ref{l:estimate} and Lemma~\ref{l:estimate0}, 
\begin{align*}
	m_{a}|x|_{a}\leq \sqrt{2a^{2}d} |x|=\sqrt{\alpha} |x|\leq \sqrt{\alpha}|x|_{a} .
\end{align*}
Therefore, we obtain for $a^{2}=1/(2d)  $, i.e., $ \alpha=1 $,
\begin{align*}
 {|x|^{(d-1)/2}}\mathrm{e}^{|x|} \geq C{|x|_{a}^{(d-1)/2}}\mathrm{e}^{m_{a}|x|_{a}} \geq  C C_{a}(x)^{-1}=CG_{1}(x)^{-1}. 
\end{align*}
Hence, the assumption implies
\begin{align*}
		\liminf_{x\to\infty}\frac{|u(x)|}{G_{1}(x)}=0.
\end{align*}
For  $ d=1,2 $, to conclude the statement we apply Corollary~\ref{cor:landis1}. 
In this case, it is well known that the operator $ \Delta $ is critical (i.e, recurrent) and, therefore, $ v=1 $ is an Agmon ground state. Thus, if $ u $ is bounded, then  $ u\in O(v) $. Hence, $ u=0 $ by Corollary~\ref{cor:landis1}.

For $ d\ge 3 $, one knows that the Green function $G_0$ at $ \alpha=0 $ satisfies $ G_{0}\in {O(|x|^{2-d})} $, see e.g. \cite[Theorem~25.11]{Woess}, which is proper and of bounded oscillation. Thus, our assumption $ u\in O({|x|^{(2-d)/2}}) $ ensures that $ u\in O(G_{0}^{1/2})$. Hence, Corollary~\ref{cor:landis2} implies 
$ u=0$.
\end{proof}

We add another variant of the theorem above. This time we focus on the decay on the axis.

\begin{thm}\label{mainZd_2} Let  $u$ be an  $ \mathcal{H} $-harmonic function of a positive Schr\"odinger operator $ \mathcal{H} =\Delta+V $ on $\mathbb{Z}^{d}  $ with $ V\leq 1 $, and let  $\lambda=\cosh^{-1}(3/2)=0.962...<1$. If 
	\begin{itemize}
		\item [(a)] $ u $ is bounded for $ d=1,2 $, 
		\item [(b)] $ u $ satisfies $  u\in O \left({|x|^{(2-d)/2}}\right) $ for $ d\ge3 $,
	\end{itemize}	
	and for an element $ e_{j} $, $ j=1,\ldots,d $ of the standard basis of $ \mathbb{Z}^{d} $ 
	\begin{align*}
		\liminf_{n\to\infty} |u(ne_{j})| n^{(d-1)/2}\mathrm{e}^{\lambda n}=0 ,
	\end{align*}
	then $ u =0$.
\end{thm}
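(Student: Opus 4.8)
The plan is to mimic the proof of Theorem~\ref{mainZd}, reducing everything to checking the hypothesis $\liminf_{x\to\infty}|u(x)|/G_{1}(x)=0$ along the single axis $x=ne_{j}$. Since the "a priori" part (conditions (a) and (b)) is \emph{identical} to that in Theorem~\ref{mainZd}, the arguments via Corollary~\ref{cor:landis1} (for $d=1,2$, using recurrence and $v=1$) and Corollary~\ref{cor:landis2} (for $d\ge 3$, using that $G_{0}\in O(|x|^{2-d})$ is proper and of bounded oscillation, so $u\in O(G_{0}^{1/2})$) carry over verbatim. So the only thing to establish is that the directional decay assumption
\begin{align*}
\liminf_{n\to\infty}|u(ne_{j})|\,n^{(d-1)/2}\mathrm{e}^{\lambda n}=0
\end{align*}
forces $\liminf_{n\to\infty}|u(ne_{j})|/G_{1}(ne_{j})=0$, which in turn suffices because $G_{1}$ is a positive solution of minimal growth at infinity for $\Delta+1$ and $u$ (after replacing $u$ by $-u$ if needed, and using Lemma~\ref{lem:v_+} together with the Liouville comparison as in the proof of Theorem~\ref{t:Landis_general}) is a positive supersolution of $(\Delta+1)\varphi=0$; a positive supersolution of minimal-growth type cannot decay faster than $G_{1}$ along any sequence going to infinity unless it vanishes identically.

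The key computation is therefore to pin down the asymptotics of $G_{1}(ne_{j})$ on the axis. Using the identification $G_{1}=\frac{1}{2d}C_{a}$ with $a^{2}=1/(2d)$ and the Molchanov asymptotics $C_{a}(x)=\dfrac{m_{a}^{(d-3)/2}}{|x|_{a}^{(d-1)/2}}\mathrm{e}^{-m_{a}|x|_{a}}(1+o(1))$, I evaluate $|x|_{a}$ and $m_{a}$ at the point $x=ne_{j}$. For such $x$, the defining equation for $r=r(ne_{j})$ reads $\frac1d\bigl((d-1)+\sqrt{1+n^{2}r^{2}}\bigr)=1+a^{2}=1+\frac{1}{2d}$, i.e.\ $\sqrt{1+n^{2}r^{2}}=3/2$, so $nr=\sqrt{5}/2$ is constant in $n$. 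Then
\begin{align*}
|ne_{j}|_{a}=\frac{1}{m_{a}}\,n\sinh^{-1}(nr)=\frac{n}{m_{a}}\sinh^{-1}\!\Bigl(\tfrac{\sqrt5}{2}\Bigr),
\end{align*}
while $m_{a}=\cosh^{-1}(1+da^{2})=\cosh^{-1}(3/2)=\lambda$. Since $\sinh^{-1}(\sqrt5/2)=\log\!\bigl(\tfrac{\sqrt5+3}{2}\bigr)$ and $\cosh(\lambda)=3/2$ gives $\mathrm{e}^{\lambda}=\tfrac{3+\sqrt5}{2}$, we get $\sinh^{-1}(\sqrt5/2)=\lambda$ exactly, hence $m_{a}|ne_{j}|_{a}=\lambda n$ and $|ne_{j}|_{a}\asymp n$. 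Plugging into Molchanov's formula yields
\begin{align*}
G_{1}(ne_{j})\asymp n^{-(d-1)/2}\mathrm{e}^{-\lambda n}\qquad (n\to\infty),
\end{align*}
so $G_{1}(ne_{j})^{-1}\asymp n^{(d-1)/2}\mathrm{e}^{\lambda n}$, and the hypothesis is precisely $\liminf_{n}|u(ne_{j})|/G_{1}(ne_{j})=0$.

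Having this, I conclude exactly as in the proof of Theorem~\ref{t:Landis_general}: if $u\not\equiv 0$, then (WLOG $u_{+}\neq 0$) $u=u_{+}>0$ is an Agmon ground state of $\mathcal{H}$ by the Liouville comparison Theorem~\ref{thm:comparison} (with $\mathcal{H}'=\Delta-W$ or $\mathcal{H}'=\Delta$ according to the dimension, as in Corollaries~\ref{cor:landis1} and~\ref{cor:landis2}), and since $V\le 1$ we have $(\Delta+1)u\ge(\Delta+V)u=0$, so $u$ is a positive supersolution of $\Delta+1$; minimal growth of $G_{1}$ then contradicts $\liminf_{x\to\infty}u(x)/G_{1}(x)\le \liminf_{n\to\infty}u(ne_{j})/G_{1}(ne_{j})=0$. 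I expect the main (and essentially only nontrivial) obstacle to be the bookkeeping in the Green's function asymptotics: namely verifying carefully that $r(ne_{j})$ is constant, that the constant $\sinh^{-1}(\sqrt5/2)$ coincides with $m_{a}=\cosh^{-1}(3/2)=\lambda$ (this is the "miracle" that makes the exponent in the statement exactly $\lambda$), and that the polynomial prefactor $|ne_{j}|_{a}^{(d-1)/2}$ is comparable to $n^{(d-1)/2}$; everything else is a direct citation of the already-proved results. One should also note that $a^{2}=1/(2d)<2$, so the auxiliary Lemmas~\ref{l:estimate} and~\ref{l:estimate0} are available if one prefers to bound $m_{a}|x|_{a}$ rather than compute it exactly, but on the axis the exact computation above is cleaner and gives the sharp constant $\lambda$.
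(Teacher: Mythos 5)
Your proposal is correct and follows essentially the same route as the paper: reduce to $\liminf_{n}|u(ne_j)|/G_1(ne_j)=0$ and reuse the proof of Theorem~\ref{mainZd}, with the only new ingredient being the axis identity $|ne_j|_a=n$ and $m_a=\cosh^{-1}(3/2)=\lambda$ for $a^2=1/(2d)$, which the paper obtains by citing the second part of Lemma~\ref{l:estimate} and which you re-derive correctly (including the identity $\sinh^{-1}(\sqrt5/2)=\cosh^{-1}(3/2)$).
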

\begin{proof}
	By Lemma~\ref{l:estimate},  for $ x=ne_{j} $,  we have that  $ |x|_{a} =|x_{j}|=|n| $, and $ m_{a}=\cosh^{-1}(3/2) $ for $ a^{2}=1/(2d) $. Hence, $ G_{1}(ne_{j}) = C\cdot n^{(1-d)/2}\mathrm{e}^{-\lambda n} (1+o(1))$.
	 Thus, the result follows along the lines of the proof of  Theorem~\ref{mainZd}.
\end{proof}

\begin{rem} 
{Let us compare Theorem \ref{mainZd} to the results in \cite{FBRS24} and  \cite{LM} assuming that $\mathcal{H}$ is positive. Recall that the results in \cite{FBRS24} and \cite{LM} hold without any  positivity assumption on the Schr\"odinger operators.}	
	
	(a) 
 In view of Corollary \ref{Cor:FBR}, one can see that the summability criteria on $u$ given in \cite{FBRS24} ensures that the assumptions of Theorem \ref{mainZd} are satisfied, and hence $u$ is trivial on $\mathbb{Z}^d$.

(b) {In \cite{LM}, Lyubarskii and Malinnikova proved that if $u$ is $\mathcal{H}$-harmonic on $\Z^d$ and satisfies the decay condition  
$$ {\liminf_{N \rightarrow \infty} \frac{1}{N}\log \max_{|x|_\infty \in \{N,N+1\}}
	|u(x)| < -\|V\|_{\infty}-4d+1 }, $$
then $u=0$. For $\|V\|_{\infty} \leq 1$, this result also follows from Theorem \ref{mainZd}.} 

To see this, assume that the above estimate is satisfied and $\|V\|_{\infty} \leq 1$. We first show that the $\liminf$ condition of Theorem \ref{mainZd} is satisfied.
Indeed, using the inequality  $|x|_{\infty} \ge  |x|/\sqrt{d} $, it follows that for any sequence  of vertices $ (x_{k}) $ 
realizing the $ \liminf $, we have 
\begin{align*}
	|u(x_{k})||x_{k}|^{\frac{d-1}{2}}\mathrm{e}^{|x_k|}\leq |x_{k}|^{\frac{d-1}{2}}\mathrm{e}^{|x_k| - (4d-1)|x_{k}|_{\infty}} \leq |x_{k}|^{\frac{d-1}{2}}{\mathrm{e}^{-2|x_k|}}\to 0
\end{align*}
as $ k\to \infty$. Thus, the liminf condition of Theorem \ref{mainZd} is satisfied. 

{Next, denote $M_N=\max_{|x|_\infty \in \{N,N-1\}}|u(x)|$ for $N \in \N$. We first claim that $M_N \leq {\mathrm{e}}^{- N(4d-1+\|V\|_{\infty})}$ for $N$ sufficiently large.  Otherwise, there exists $N_0 \in \N$ such that $M_{N_0} > {\mathrm{e}}^{-N_0(4d-1+\|V\|_{\infty})}$. On the other hand, it has been shown in the proof of \cite[Lemma 7.2]{FBRS24} that
$$ M_{N+1} \geq  \frac{M_{N}}{(4d-1+\|V\|_{\infty}) } \geq M_{N} {\mathrm{e}}^{-(4d-1+\|V\|_{\infty})} \,.$$
 It follows that  $M_{N_0+1}> {\mathrm{e}}^{-(N_0+1)(4d-1+\|V\|_{\infty})}$ and by induction, 
 $$M_{N_0+n}> {\mathrm{e}}^{-(N_0+n)(4d-1+\|V\|_{\infty})}, \quad   n \in \N.$$ 
 But this contradicts the above liminf condition of \cite{LM}. This proves our claim. }
 
{ Hence, we infer that there exists $C>0$ such that $|u(x)| \leq C {\mathrm{e}}^{-|x|_{\infty}(4d-1+\|V\|_{\infty})} $ on $\mathbb{Z}^d$. This in particular gives $|u(x)| \leq C {\mathrm{e}}^{-4\sqrt{d}|x|} $ on $\mathbb{Z}^d$ for some $C>0$, and consequently, $u \in O(|x|^{(2-d)/2})$. Thus, $u=0$ by Theorem \ref{mainZd}.}
\end{rem}
\subsection{Regular trees}
We consider a tree  with countably infinite vertex set $ X $. We fix an arbitrary vertex $ o $ and denote by $ |x|= d(x,o) $ the {\it combinatorial graph distance} to $ o $ from a vertex $ x $. A tree is said to be {\it {$d$-}regular} if every  vertex has degree $d\in\mathbb{N}$.
Again, we consider the Laplacian $ \Delta $ with standard weights $ b(x,y)=1 $ if $d(x,y)= 1$ and $0$ otherwise as well as  $ m=1 $.

\begin{thm}\label{t:trees} Let $u$ be an $ \mathcal{H} $-harmonic function  of a positive Schr\"odinger operator $ \mathcal{H} =\Delta+V  $ with $ V\leq 1 $ on a $d$-regular tree and $$  u\in O \left( |x|^{\frac{1}{2}}d^{- \frac{|x|}{2}}\right) \quad \mbox{and}\quad \liminf_{x\to\infty}|u(x)|d^{|x|}  =0$$
is trivial, i.e. $ u=0 $.
\end{thm}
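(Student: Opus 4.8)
The plan is to reduce Theorem~\ref{t:trees} to Corollary~\ref{cor:landis1}, the two inputs specific to the regular tree being: a critical Hardy weight for $\Delta$ whose Agmon ground state has the growth $|x|^{1/2}d^{-|x|/2}$, and the asymptotics of the Green function $G_{1}$ of $\Delta+1$.

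For the a priori side, note that every automorphism of the tree fixing the root $o$ permutes the spheres $\{|x|=n\}$, so a spherically symmetric positive function is determined by its values $g(n)$, $n\in\N_{0}$, and for $n\ge1$ the Laplacian acts on it by the three--term recursion $\Delta g(n)=d\,g(n)-g(n-1)-(d-1)g(n+1)$. Using this normal form I would invoke the analysis of optimal Hardy inequalities on $d$--regular trees from \cite{Berchio} to obtain, for $d\ge3$ (when $\Delta$ is subcritical), a Hardy weight $W\ge0$ such that $\Delta-W$ is critical and its null--critical Agmon ground state $v>0$ satisfies $v(x)\asymp|x|^{1/2}d^{-|x|/2}$. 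Then $O(v)=O(|x|^{1/2}d^{-|x|/2})$, so the a priori hypothesis on $u$ is precisely $u\in O(v)$. (Alternatively one may take $v=G_{0}^{1/2}\asymp(d-1)^{-|x|/2}$, which is proper and of bounded oscillation on a $d$--regular tree with $d\ge3$, and apply Corollary~\ref{cor:landis2}, since $|x|^{1/2}d^{-|x|/2}\in O((d-1)^{-|x|/2})$.)

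For the decay side, $G_{1}(x)$ depends only on $|x|$ by spherical symmetry, and away from $o$ the sequence $n\mapsto G_{1}(x)$, $|x|=n$, solves $(d-1)G_{1}(n+1)-(d+1)G_{1}(n)+G_{1}(n-1)=0$. Selecting the decaying root of the characteristic equation $(d-1)t^{2}-(d+1)t+1=0$ gives the exact exponential decay rate of $G_{1}$; comparing this rate with $d$ then lets one rewrite the hypothesis $\liminf_{x\to\infty}|u(x)|d^{|x|}=0$ as $\liminf_{x\to\infty}|u(x)|/G_{1}(x)=0$. With both inputs available, replacing $u$ by $-u$ if necessary so that $u_{+}\neq0$, the function $u$ satisfies all hypotheses of Corollary~\ref{cor:landis1}, whence $u=0$. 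The case $d=2$ (the line $\mathbb{Z}$, where $\Delta$ is recurrent) is covered by the same corollary with $W=0$ and $v\equiv1$, because $u\in O(|x|^{1/2}2^{-|x|/2})\subset O(1)=O(v)$.

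The main obstacle is the identification of the two asymptotics: extracting from \cite{Berchio} that the Agmon ground state of the optimal Hardy weight on the $d$--regular tree is comparable to $|x|^{1/2}d^{-|x|/2}$, and computing the precise exponential decay rate of $G_{1}$ from the characteristic polynomial so that the hypothesis on $u$ transfers cleanly to $\liminf_{x\to\infty}|u(x)|/G_{1}(x)=0$. Once these explicit computations are in place, the statement follows immediately from the abstract results of Section~\ref{sec:graphs}.
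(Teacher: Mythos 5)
Your overall strategy --- reduce to Corollary~\ref{cor:landis1} by supplying (i) a critical Hardy weight for $\Delta$ on the tree whose Agmon ground state is comparable to $|x|^{1/2}d^{-|x|/2}$ and (ii) the exact decay rate of $G_{1}$ --- is the same as the paper's. The ground-state half is in order: the paper reads $v=(\phi_{1}\phi_{2})^{1/2}=|x|^{1/2}d^{-|x|/2}$ directly off the construction of \cite[Theorem~2.7]{Berchio}, exactly as you propose; your alternative route via $v=G_{0}^{1/2}$ and Corollary~\ref{cor:landis2} is also viable, and your separate treatment of the recurrent case $d=2$ (where no critical Hardy weight $W\gneq 0$ can exist) is a point the paper passes over in silence.

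The gap sits precisely in the step you label ``the main obstacle'' and then assume resolves favourably. From your radial recursion $(d-1)G_{1}(n+1)-(d+1)G_{1}(n)+G_{1}(n-1)=0$ the decaying characteristic root is $t_{-}=\bigl((d+1)-\sqrt{d^{2}-2d+5}\,\bigr)/(2(d-1))$, and evaluating the characteristic polynomial at $t=1/d$ gives $(d-1)/d^{2}-(d+1)/d+1=-1/d^{2}<0$, so $t_{-}<1/d$ strictly for $d\ge 2$. Hence your computation yields that $G_{1}$ decays \emph{strictly faster} than $d^{-|x|}$, and the implication you need, $\liminf_{x\to\infty}|u(x)|d^{|x|}=0\Rightarrow\liminf_{x\to\infty}|u(x)|/G_{1}(x)=0$, goes the wrong way: up to constants $|u(x)|/G_{1}(x)=\bigl(|u(x)|d^{|x|}\bigr)(dt_{-})^{-|x|}$ with $(dt_{-})^{-|x|}\to\infty$. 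So ``comparing this rate with $d$'' does not transfer the hypothesis, and as written your proof does not close. The paper instead computes $G_{1}$ through the forward-Green-function recursion along the unique path to the root, arriving at the quadratic $dg^{2}-(d+1)g+1=0$ whose smaller root is exactly $1/d$, and hence $G_{1}(x)=G_{1}(o)\,d^{-|x|}$ on the nose, which is what makes the $\liminf$ hypothesis usable. That quadratic is not the one your (correctly derived, for vertex degree $d$) radial recursion produces --- the discrepancy is an off-by-one in the number of forward neighbours, i.e.\ in whether ``regular tree of degree $d$'' counts the vertex degree or the branching number --- and you must reconcile the two computations, or else strengthen the $\liminf$ hypothesis to $\liminf_{x\to\infty}|u(x)|/G_{1}(x)=0$ with the rate $t_{-}$ you actually obtain, before the argument is complete.
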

\begin{proof} We aim to apply Corollary~\ref{cor:landis1}.
We start by estimating the positive minimal Green function $ G_{\alpha} =(\Delta+\alpha)^{-1}1_{o}$ with $ \alpha=1 $. 
For a vertex $ x $, we consider the forward tree $T_x$ of $x $, that is, the subgraph $ b_{x} $ of $ b $ on the vertex set $ X_{x}=\{y\in X\mid d(o,y)=d(o,x)+d(x,y)\} $. Furthermore, we consider the Dirichlet Laplacian $ \Delta_x ^{(D)}$ with respect to the forward tree $ T_{x} $  which is $\Delta_x^{(D)} = \Delta_{b_{x},1} +1_{x}$. Furthermore, we denote by $$  G^{(x)}_{\alpha} =(\Delta_{x}^{(D)}+\alpha)^{-1}1_{x} $$ the Green function for the forward tree of $ x $.
Then, for  $ x $ and the unique path $ o=x_{0}\sim \ldots\sim x_{n}=x $   connecting $ x $ and $ o $, we have
\begin{align*}
	G_{\alpha}(x)=G_{\alpha}(o)\prod_{j=1}^{n}G^{(x_{j})}_{\alpha}(x_{j})
\end{align*}
and we also have for a vertex $ x $ with number of  neighbors $ d(x) $
\begin{align*}
-\frac{1}{	G_{\alpha}^{(x)}(x)}= -\alpha-d(x)+  \sum_{y\in X_{x}\setminus \{x\},y\sim x} G_{\alpha}^{(y)}(y)
\end{align*} 
see e.g. \cite[Equation~(2.9) and (2.10)]{AizenmanSimsWarzel}, \cite[Proposition~2.1]{Klein} or \cite[Proposition~2.7. and Proposition~2.10.]{KellerDiss}.
For a regular tree with degree $ d=d(x) $ and $ \alpha=1 $, we have that all forward Green functions are the same, i.e., $ 	G_{1}^{(x_{j})}(x_{j})=g $ for some $g>0$. Thus, we have to solve
\begin{align*}
	 d g^{2}-({d+1})g+1=0 
\end{align*}
which gives
\begin{align*}
	g_{\pm}=\frac{d+1}{2d}\pm \sqrt{\frac{(d+1)^{2}}{4d^2}-\frac{1}{d}}=\frac{d+1\pm (d-1)}{2d}\, .
\end{align*}
Since  $G_{1}^{(x_{j})} $ is the smallest positive solution to $ (\Delta^{(D)}_{x_{j}} +1)u=1_{x_{j}}$ and $ g_{-}=1/d<1= g_{+} $, we conclude $ G_{1}^{(x_{j})}(x_{j})=g_{-}= {1}/{d}$. Therefore, $G_{1}(x) = G_{1}(o)d^{-|x|}$. Thus,
\begin{align*}
	\liminf_{x\to \infty}\frac{|u(x)|}{G_{1}(x)} \leq C 	\liminf_{x\to \infty}{|u(x)|}d^{|x|}=0.
\end{align*}

Secondly, we consider the ground state $ v $ associated to a critical Hardy weight for $ \Delta $. In the proof of  \cite[Theorem~2.7]{Berchio}, the authors construct a critical Hardy weight $W= \Delta [(\phi_{1}\phi_{2})^{1/2}] /  (\phi_{1}\phi_{2})^{1/2}$  for $ \Delta $. The functions $ \phi_{1} $, $ \phi_{2} $ are given for $ |x|\geq 1 $ by
\begin{align*}
	\phi_{1}(x)=|x|d^{-|x|/2}\qquad \mbox{and}\qquad \phi_{2}=d^{-|x|/2}
\end{align*}
and $ \phi_{1}(o)=\phi_{2}(o) $ are some positive constant.
Clearly, the function 	$$ 	v(x)=(\phi_{1}(x)\phi_{2}(x))^{1/2} =|x|^{1/2}d^{-|x|/2}  $$ is positive and satisfies $$  (\Delta-W)v=\Delta v -\frac{\Delta v}{v}v=0.  $$
Since $ W $ is proved in \cite[Theorem~2.7]{Berchio} to be a critical Hardy weight, $ v $ is an Agmon ground state of $\Delta-W$. Hence, by assumption $ u\in O(v)  $.
Thus,  $ u=0 $ follows from Corollary~\ref{cor:landis1}.
\end{proof}

\section{On the Fractional Laplacian}\label{sec:Frac}
For a given graph $ b $ over $ (X,m) $, let $ \mathcal{Q}_{0} $ be the energy functional of $\Delta $, and let $ L $ be the positive selfadjoint operator on $ \ell^{2}(X,m) $ associated to the form closure  of  $ \mathcal{Q}_{0}\vert_{C_{c}(X)} $ in $ \ell^{2}(X,m) $ (see Section~\ref{sec:graphs}).

For $ \sigma \in (0,1) $, the discrete fractional Laplacian $ L^{\sigma} $ on $ \ell^{2}(X,m) $ is defined via the spectral theorem. Then $L^\gs$ satisfies 
	\begin{align*}
		L^{\sigma}f(x)=\frac{1}{|\Gamma(-\sigma)|}\int_{0}^{\infty}(I-\mathrm{e}^{-tL})f(x)\frac{\dt}{t^{1+\sigma}} \,,
	\end{align*}	
	where $ \mathrm{e}^{-tL} $ is the heat semigroup of the Laplacian $L$.  Note that the semigroup $ \mathrm{e}^{-tL} $ can be extended to $ \ell^{\infty}(X) $ and the graph is called \emph{stochastically complete} if $ \mathrm{e}^{-tL}1=1 $. In this case it can be observed (see for example \cite{KN23}) that
	\begin{align*}
		L^{\sigma}f(x)&=\frac{1}{|\Gamma(-\sigma)|}\int_{0}^{\infty}\left((\mathrm{e}^{-tL}1)f(x) - (\mathrm{e}^{-tL}f)(x)\right)\frac{\dt}{t^{1+\sigma}}\\
		&= \frac{1}{|\Gamma(-\sigma)|}\int_{0}^{\infty}\sum_{y\neq x}(\mathrm{e}^{-tL}1_y(x))(f(x) -f(y)) \frac{\dt}{t^{1+\sigma}}\\
		&=\frac{1}{|\Gamma(-\sigma)|} \sum_{y\in X}b_{\sigma}(x,y) |(\nabla_{xy} f)	
	\end{align*}
	with \begin{align*}
	b_{\sigma}(x,y)= \int_{0}^{\infty}\mathrm{e}^{-tL}1_y(x)\frac{\dt}{t^{1+\sigma}}
	\end{align*}
	which again gives rise to a graph Laplacian for a  weighted graph $ b_{\sigma} $ which we denote by $  \Delta^{\sigma}  =\Delta_{b_{\sigma},0,m} $. Furthermore, this operator  defined on $ \mathcal{F}_{b_{\sigma}  }$ is an extension of $ L^{\sigma} $.  Since the semigroup $ \mathrm{e}^{-tL} $ maps positive functions to strictly positive functions on connected graphs, $ b_{\sigma} $ will be complete, i.e. every two vertices are adjacent. Hence, as the original graph is infinite,  the graph $ b_{\sigma} $ will be non-locally finite.

In view of the above discussion, we will use the notions $ \mathcal{H}^{\sigma} $-harmonic function, Green function $G_{\alpha}^{\sigma}$ of $\Delta^{\sigma} + \alpha $, (sub)criticality of $\Delta^{\sigma}$, critical Hardy weight for $\Delta^{\sigma}$, etc.,  for the corresponding notions associated with  $\Delta_{b_{\sigma},0,m}$. Therefore, the theory developed in Section~\ref{sec:graphs} and in particular   the Liouville comparison principle, Theorem~\ref{thm:comparison}, applies for  fractional Schr\"odinger operators of the form 
$$ \mathcal{H}^{\sigma}=\Gd^{\sigma}+V \,, \ \ \sigma \in (0,1)\,. $$ 
Consequently, we can study the Landis-type unique continuation results for fractional Schr\"odinger equations. 

The {\it{global unique continuation}} property of the fractional Laplacian on a mesh $(h\mathbb{Z})^d$ was recently investigated in \cite{FRR}. It has been noted that although the global unique continuation property holds for    the fractional Laplacian in the continuum, it fails in the discrete case. Nevertheless, the authors  showed in \cite[Proposition 1.2 \& 1.4]{FRR} that certain global unique continuation property holds for $\ell^{2} $-solutions. Moreover, they have also investigated the global unique continuation for $\mathcal{H}^{\sigma}$.

This motivates us to consider below the Landis-type unique continuation property for $\mathcal{H}^{\sigma}$ on the Euclidean lattice $\mathbb{Z}^d$. {Recall that $\mathbb{Z}^d$ is stochastically complete, that is, ${\mathrm{e}}^{-t\Delta}\,1=1$ (this follows for example by expanding $e^{-t\Delta}=\sum_{k=0}^{\infty}(-t\Delta)^{k}/k!$ for the bounded operator $\Delta$).
\begin{thm}\label{mainZdfrac}  Let $\sigma \in (0,1)$ be such that $0<2\sigma < d$, and  $u$ be an  $ \mathcal{H}^{\sigma} $-harmonic function of a  positive fractional Schr\"odinger operator $ \mathcal{H}^{\sigma} =\Delta^{\sigma}+V $ on $\mathbb{Z}^{d}  $ with $ V\leq 1 $. If $ u $ satisfies  $$  u\in O \left({|x|^{2\sigma-d}}\right) \quad\mbox{ and }\quad \liminf_{x\to\infty} |u(x)| |x|^{2\sigma+d}=0, $$
then $ u=0 $.
\end{thm}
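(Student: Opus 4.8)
The plan is to apply Theorem~\ref{t:Landis_general} (equivalently Corollary~\ref{cor:landis3}) to the graph $b_\sigma$ on $X=\Z^d$, exactly as in the proof of Theorem~\ref{mainZd}, so the work reduces to (i) identifying the right critical Hardy weight/Agmon ground state for $\Delta^\sigma$ that captures the a~priori bound $u\in O(|x|^{2\sigma-d})$, and (ii) establishing the decay estimate $\liminf_{x\to\infty} |u(x)|/G_1^\sigma(x)=0$ from the hypothesis $\liminf_{x\to\infty}|u(x)|\,|x|^{2\sigma+d}=0$. Both ingredients come from the Green-function asymptotics for $\Delta^\sigma$ on $\Z^d$ in \cite{DER24}: one expects $G_0^\sigma(x)\asymp |x|^{2\sigma-d}$ (the natural fractional analogue of the $|x|^{2-d}$ behaviour of the ordinary lattice Green function), and one expects the resolvent $G_1^\sigma=(\Delta^\sigma+1)^{-1}1_0$ to decay only polynomially, of order $|x|^{-(2\sigma+d)}$, since $b_\sigma$ is non-locally finite and the semigroup has heavy tails (there is no exponential decay as in the nearest-neighbour case). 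I would first quote these two asymptotic statements from \cite{DER24} as a lemma.

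\smallskip

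Granting the lemma, the argument runs as follows. First, for the decay estimate: $G_1^\sigma(x)\asymp |x|^{-(2\sigma+d)}$ gives, for some $C>0$,
\begin{align*}
  \frac{|u(x)|}{G_1^\sigma(x)} \le C\,|u(x)|\,|x|^{2\sigma+d}\qquad\text{for all large }x,
\end{align*}
so the hypothesis $\liminf_{x\to\infty}|u(x)|\,|x|^{2\sigma+d}=0$ immediately yields $\liminf_{x\to\infty}|u(x)|/G_1^\sigma(x)=0$. Second, for the a~priori bound: since $2\sigma<d$, the graph $b_\sigma$ is subcritical and $G_0^\sigma$ is finite; moreover $G_0^\sigma\asymp|x|^{2\sigma-d}$ is proper and of bounded oscillation (polynomial comparability on neighbours, using $|x\pm e_j|\asymp|x|$). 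Hence by the supersolution construction \cite[Theorem~1.1]{KellerPinchoverPogorzelski_optimal}, $W^\sigma=\Delta^\sigma (G_0^\sigma)^{1/2}/(G_0^\sigma)^{1/2}$ is a critical Hardy weight for $\Delta^\sigma$ with Agmon ground state $v=(G_0^\sigma)^{1/2}\asymp|x|^{(2\sigma-d)/2}$. But the stated a~priori bound is $u\in O(|x|^{2\sigma-d})=O(G_0^\sigma)$, which is stronger than $O((G_0^\sigma)^{1/2})=O(v)$ (as $2\sigma-d<0$ makes $|x|^{2\sigma-d}$ smaller than $|x|^{(2\sigma-d)/2}$ for large $x$); so in particular $u\in O(v)$. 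With $V\le 1$, connectedness of $b_\sigma$, and positivity of the energy form all in place, Corollary~\ref{cor:landis1} (or directly Theorem~\ref{t:Landis_general}, translating $u\in O(v)$ into $b_\sigma(|u|\otimes|u|)\le C\,b_\sigma(v\otimes v)$ via the ground state transform / minimal growth of $v$) concludes $u=0$.

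\smallskip

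The main obstacle I expect is purely bookkeeping around the non-locally finite graph $b_\sigma$: one must check that the abstract hypotheses of Theorem~\ref{t:Landis_general} — in particular the comparability $b_\sigma(|u|\otimes|u|)\le C\,b_\sigma(v\otimes v)$ and the properness/bounded-oscillation conditions underlying the supersolution construction — are genuinely applicable when edge weights $b_\sigma(x,y)$ are strictly positive for all $x\neq y$. The pointwise bound $u\in O(v)$ gives $|u|\otimes|u|\le C\,v\otimes v$ pointwise on $X\times X$, hence $b_\sigma(|u|\otimes|u|)\le C\,b_\sigma(v\otimes v)$ trivially, so this is not actually an issue; the real care is in citing \cite{DER24} with the correct normalisation so that the exponents $2\sigma-d$ and $-(2\sigma+d)$ match, and in confirming that \cite[Theorem~1.1]{KellerPinchoverPogorzelski_optimal} does not secretly require local finiteness for the Hardy-weight construction (if it does, one falls back on Corollary~\ref{cor:landis3} with $\alpha=0$, choosing $C_o$ so that $\Delta^\sigma-C_o1_o$ is critical, which needs no such hypothesis). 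Modulo these citation-level checks, the proof is a direct specialisation of Theorem~\ref{mainZd} to the fractional graph.
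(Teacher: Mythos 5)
Your fallback route is exactly the paper's proof: the paper applies Corollary~\ref{cor:landis3} with $\alpha=0$, using $G_0^{\sigma}\asymp|x|^{2\sigma-d}$ (so $u\in O(G_0^{\sigma})$) and $G_1^{\sigma}\asymp|x|^{-(2\sigma+d)}$ (so the $\liminf$ hypothesis gives $\liminf_{x\to\infty}|u(x)|/G_1^{\sigma}(x)=0$), both from \cite[Theorems~7 and~8]{DER24}. So the proposal is correct in substance, and your handling of the decay estimate and of the translation of $u\in O(v)$ into the pointwise comparison $b_\sigma(|u|\otimes|u|)\le C\,b_\sigma(v\otimes v)$ is fine.

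Your \emph{primary} route, however, is genuinely blocked, not merely a citation-level worry: the graph $b_\sigma$ is complete (every two vertices are adjacent, since the heat semigroup is strictly positivity improving), so \emph{no} strictly positive function tending to zero can be of bounded oscillation in the sense $\sup_{x\sim y}\phi(x)/\phi(y)<\infty$ --- indeed the paper records that the existence of a strictly positive proper function of bounded oscillation forces local finiteness, and explicitly remarks (in the discussion preceding Theorem~\ref{fracd=1}) that $G_0^{\sigma}$ fails bounded oscillation, which is why the supersolution construction of \cite[Theorem~1.1]{KellerPinchoverPogorzelski_optimal} cannot be invoked here and why the weaker a~priori bound $O(|x|^{(2\sigma-d)/2})$ is only obtained in $d=1$ via \cite{KN23}. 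So the fallback via Corollary~\ref{cor:landis3} is mandatory rather than optional; since you state it correctly and completely, the proof goes through.
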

\begin{proof}
	It is known that the Green function $G_0^{\sigma}$ of the fractional Laplacian $\Delta^{\sigma}$ on $\mathbb{Z}^d$ behaves as $G_0^{\sigma} \asymp |x|^{2\sigma-d}$ when $0<2\gs<d$, see \cite[Theorem 8]{DER24}. Thus, due to our assumption, $u \in O(G_0^{\sigma})$.
	
	On the other hand, the Green function $G_1^{\sigma}$ of the fractional Laplacian $\Delta^{\sigma}+1$ on $\mathbb{Z}^d$ behaves as $G_1^{\sigma} \asymp |x|^{-(2\sigma+d)}$ \cite[Theorem 7]{DER24}.
	
	Therefore, the conclusion $ u=0$ follows from  Corollary \ref{cor:landis3}.
\end{proof}
\begin{rem}
	Note that the $\liminf$ condition in the above theorem requires only a polynomial  decay in contrast to the non-fractional case $\gs=1$ (see Theorem \ref{mainZd}).
\end{rem}

Next, we would like to discuss an improvement of the above result by replacing  the a priori bound $ u\in O \left({|x|^{2\sigma-d}}\right) $ in this theorem above by the  weaker bound $u \in O(|x|^{(2\sigma -d)/2})$. We may think of constructing a critical Hardy weight $W_{\sigma}$ for $\Delta^\sigma$ and show that the Agmon ground state $v$ of  $\Delta^\sigma-W_{\sigma}$ satisfies $v \asymp |x|^{(2\sigma -d)/2}$. Having the Green function $G_0^{\sigma}$ and its explicit asymptotic, one may anticipate to use a similar supersolution construction as in \cite[Theorem~1.1.]{KellerPinchoverPogorzelski_optimal} to construct an optimal Hardy weight $W_{\sigma}$ and get the desired asymptotic of $v$. This is not immediate, since the graph $b_{\sigma}$ is non-locally finite and hence $G_0^{\sigma}$ fails to satisfy the {\it{bounded-oscillation}} property, which is required for the optimality result in \cite[Theorem~1.1.]{KellerPinchoverPogorzelski_optimal}. However, in one dimension, due to the recent results in \cite{KN23}, we have an optimal Hardy weight $W_{\sigma}$ of $\Delta^{\sigma}$ on $\mathbb{Z}$ and we know the asymptotic of the associated Agmon ground state. Thus, we have the following result.
\begin{thm}\label{fracd=1}
	Let $\sigma \in (0,1/2)$ and $d=1$. Assume that $u$ is an  $ \mathcal{H}^{\sigma} $-harmonic function of a positive fractional Schr\"odinger operator $ \mathcal{H}^{\sigma} =\Delta^{\sigma}+V $ on $\mathbb{Z}$ with $ V\leq 1 $. If  
	$$  u\in O \left({|x|^{(2\sigma -d)/2}}\right) \quad\mbox{ and }\quad \liminf_{x\to\infty} |u(x)| |x|^{1+2\sigma}=0, $$
	then $ u=0 $.
\end{thm}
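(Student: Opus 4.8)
The plan is to mirror the proof of Theorem~\ref{mainZdfrac}, replacing the a priori comparison with $G_0^\sigma$ by a comparison with the Agmon ground state of an optimal Hardy weight for $\Delta^\sigma$ on $\mathbb{Z}$, and then invoking Corollary~\ref{cor:landis1}. First I would recall from \cite{KN23} that in dimension $d=1$ and for $\sigma\in(0,1/2)$ there is an \emph{optimal} (in particular critical) Hardy weight $W_\sigma\ge 0$ for $\Delta^\sigma$, and that the associated Agmon ground state $v>0$ of $\Delta^\sigma-W_\sigma$ satisfies the asymptotics $v\asymp |x|^{(2\sigma-1)/2}=|x|^{(2\sigma-d)/2}$ as $x\to\infty$. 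Hence the hypothesis $u\in O(|x|^{(2\sigma-d)/2})$ is exactly $u\in O(v)$.

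Second, I would pin down the decay of $G_1^\sigma$, the Green function of $\Delta^\sigma+1$ on $\mathbb{Z}$: by \cite[Theorem~7]{DER24}, $G_1^\sigma\asymp |x|^{-(2\sigma+d)}=|x|^{-(1+2\sigma)}$. Consequently the hypothesis $\liminf_{x\to\infty}|u(x)||x|^{1+2\sigma}=0$ translates into
\begin{align*}
	\liminf_{x\to\infty}\frac{|u(x)|}{G_1^\sigma(x)}=0 .
\end{align*}
With these two translations in hand, the conclusion $u=0$ follows immediately from Corollary~\ref{cor:landis1} applied to the Schr\"odinger operator $\mathcal{H}^\sigma=\Delta^\sigma+V$ on the weighted graph $b_\sigma$ over $(\mathbb{Z},m)$, using that $V\le 1$ and that the comparison graph $b_\sigma$ coincides with the graph underlying $\Delta^\sigma-W_\sigma$, so that condition (c) of Theorem~\ref{t:Landis_general} (i.e.\ $b_\sigma(|u|\otimes|u|)\le C\,b_\sigma(v\otimes v)$) is guaranteed by $u\in O(v)$ together with the bounded-oscillation property of $v$ inherent in the optimal Hardy weight construction of \cite{KN23}.

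The main obstacle I anticipate is the last point: because $b_\sigma$ is non-locally finite, $O(v)$ for a pointwise bound on $u$ does not automatically upgrade to the \emph{edge-weighted} bound $b_\sigma(|u|\otimes|u|)\le C\,b_\sigma(v\otimes v)$ needed to feed into the Liouville comparison principle. One must check that for $x\sim y$ (which, on $b_\sigma$, means \emph{all} pairs $x\neq y$) the ratio $\big(|u(x)||u(y)|\big)/\big(v(x)v(y)\big)$ stays bounded; since both $u/v$ and $v$ are of bounded oscillation (the latter from \cite{KN23}), and $|u|/v$ is bounded at infinity and finite everywhere, this product is uniformly bounded, so $b_\sigma(|u|\otimes|u|)\le C\,b_\sigma(v\otimes v)$ holds. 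Once this is verified, everything else is a direct citation of \cite{KN23} and \cite{DER24} plus Corollary~\ref{cor:landis1}, so the proof is short.
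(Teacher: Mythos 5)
Your proposal is correct and follows essentially the same route as the paper: cite \cite{KN23} for the optimal Hardy weight on $\mathbb{Z}$ and the asymptotics $v\asymp|x|^{(2\sigma-d)/2}$ of its Agmon ground state, cite \cite[Theorem~7]{DER24} for $G_1^\sigma\asymp|x|^{-(1+2\sigma)}$, and conclude via Corollary~\ref{cor:landis1}. The ``obstacle'' you flag is not really one: since $v$ is strictly positive everywhere and $u\in O(v)$, one has $|u|\le Cv$ pointwise on all of $X$, so $b_\sigma(|u|\otimes|u|)\le C^2 b_\sigma(v\otimes v)$ holds for every pair of vertices without invoking bounded oscillation — this is exactly what Corollary~\ref{cor:landis1} already packages.
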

\begin{proof}
	Using \cite[Theorem 1 and 5]{KN23}, there exists an optimal Hardy weight $W_{\sigma}$ such that the associated Agmon ground state $v$ of $\Delta^{\sigma} - W_{\sigma}$ on $\mathbb{Z}$ satisfies $v \asymp |x|^{(2\sigma -d)/2}$. Thus, due to our assumption, $u \in O(v)$.
	
	On the other hand, we also have that Green function $G_1^{\sigma}$ of the fractional Laplacian $\Delta^{\sigma}+1$ on $\mathbb{Z}$ behaves as $G_1^{\sigma} \asymp |x|^{-(1+2\sigma)}$ \cite[Theorem 7]{DER24}. Hence, Corollary~\ref{cor:landis1} implies that  $u=0$ on $\mathbb{Z}$.
\end{proof}

\appendix
\section{Estimates of norms on $\mathbb{Z}^d$ for the resolvent asymptotics}

In \cite{Michta} one finds the following estimate of the $|\cdot|_a$ norm by the $ \ell^{2} $-norm $|\cdot|$ and the $ \ell^{1} $-norm $ \|\cdot\|_{1} $ on $ \mathbb{Z}^d $.
\begin{lem}[Proposition~1.2. in \cite{Michta}]\label{l:estimate0} For all $ a>0 $, we have	
	\begin{align*}
		|x|\leq |x|_{a}\leq \|x\|_{1}\,.
	\end{align*}
\end{lem}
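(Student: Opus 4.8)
The plan is to give $m_a|x|_a$ a Legendre-type variational description, from which both inequalities drop out almost immediately. First I would dispose of $x=0$ (all three quantities vanish) and fix $x\neq0$; let $r=r(x)>0$ be the unique solution of the defining equation, so that $\sum_{i=1}^d\sqrt{1+x_i^2r^2}=d(1+a^2)$, and set $\xi^{(x)}_i:=\sinh^{-1}(x_ir)$. Then $m_a|x|_a=\sum_i x_i\xi^{(x)}_i$, and since $\cosh\xi^{(x)}_i=\sqrt{1+x_i^2r^2}$ we get $\sum_i(\cosh\xi^{(x)}_i-1)=d(1+a^2)-d=da^2=\cosh m_a-1$. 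The function $\Phi(\xi):=\sum_i x_i\xi_i-r^{-1}\sum_i(\cosh\xi_i-1)$ is concave on $\mathbb R^d$ (a linear function minus the positive multiple $r^{-1}$ of the convex function $\sum_i\cosh\xi_i$), and $\partial_{\xi_i}\Phi(\xi^{(x)})=x_i-r^{-1}\sinh\xi^{(x)}_i=x_i-r^{-1}(x_ir)=0$, so $\xi^{(x)}$ is its global maximiser; the standard convex-duality argument then gives
\[
 m_a|x|_a=\max\Big\{\textstyle\sum_i x_i\xi_i\ :\ \xi\in\mathbb R^d,\ \sum_i(\cosh\xi_i-1)\le da^2\Big\}.
\]

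For the upper bound, if $\xi$ is feasible then $\cosh\xi_i-1\le da^2$ for each $i$, hence $|\xi_i|\le\cosh^{-1}(1+da^2)=m_a$, so $\sum_i x_i\xi_i\le\sum_i|x_i|\,|\xi_i|\le m_a\|x\|_1$; taking the maximum yields $|x|_a\le\|x\|_1$.

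For the lower bound, I would test with $\xi_i=c\,x_i/|x|$, where $c=c_\ast>0$ is the unique solution of $\sum_i(\cosh(cx_i/|x|)-1)=da^2$ (existence and uniqueness since the left side is continuous, strictly increasing in $c\ge0$, zero at $c=0$, and unbounded). This $\xi$ is feasible and $\sum_i x_i\xi_i=(c_\ast/|x|)\sum_i x_i^2=c_\ast|x|$, so $m_a|x|_a\ge c_\ast|x|$, and it remains to show $c_\ast\ge m_a$. By the monotonicity just used, this is equivalent to $\sum_i(\cosh(m_ax_i/|x|)-1)\le da^2$; writing $y_i:=x_i/|x|$ (so $\sum_i y_i^2=1$ and $|y_i|\le1$) it follows from the elementary inequality $\cosh(m_ay)-1\le y^2(\cosh m_a-1)$ for $|y|\le1$, which one checks term by term in the power series since $y^{2k}\le y^2$ whenever $|y|\le1$ and $k\ge2$. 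Summing over $i$ then gives $\sum_i(\cosh(m_ay_i)-1)\le(\cosh m_a-1)\sum_i y_i^2=\cosh m_a-1=da^2$, so $c_\ast\ge m_a$ and hence $m_a|x|_a\ge m_a|x|$.

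The only genuinely delicate step is the variational identity of the first paragraph; once it is in hand, the upper bound is a coordinatewise estimate on the feasible set and the lower bound is the choice of test direction parallel to $x$ combined with the convexity inequality for $\cosh$. I expect the write-up of the concave-duality step (equivalently, verifying that the Lagrange/KKT conditions for the $\max$ hold at $\xi^{(x)}$, including that $\xi^{(x)}$ realises the constraint with equality) to be the part needing the most care.
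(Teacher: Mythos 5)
Your argument is correct and complete. Note that the paper itself does not prove this lemma at all: it is quoted verbatim as Proposition~1.2 of the cited reference of Michta and Slade, so any proof you give is necessarily "your own route." That said, your route is a clean one. The key step — the identity
\[
 m_a|x|_a=\max\Bigl\{\textstyle\sum_i x_i\xi_i\ :\ \sum_i(\cosh\xi_i-1)\le da^2\Bigr\}
\]
— is rigorously justified by your concavity/KKT argument: $\Phi$ is concave, its gradient vanishes at $\xi^{(x)}$ (since $\sinh\xi_i^{(x)}=x_ir$), and $\xi^{(x)}$ saturates the constraint, so for any feasible $\xi$ one has $\sum_i x_i\xi_i=\Phi(\xi)+r^{-1}\sum_i(\cosh\xi_i-1)\le\Phi(\xi^{(x)})+r^{-1}da^2=\sum_i x_i\xi_i^{(x)}$. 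The two bounds then follow exactly as you say: feasibility forces $|\xi_i|\le m_a$ coordinatewise (giving $|x|_a\le\|x\|_1$), and the test direction $\xi=c_*x/|x|$ together with the termwise inequality $\cosh(m_ay)-1\le y^2(\cosh m_a-1)$ for $|y|\le1$ gives $c_*\ge m_a$ and hence $|x|_a\ge|x|$. As a bonus, your variational identity exhibits $m_a|\cdot|_a$ as the support function of the convex symmetric body $\{\xi:\sum_i(\cosh\xi_i-1)\le da^2\}$, which is precisely the reason (also exploited in the cited reference and mentioned in the paper) that $|\cdot|_a$ is a norm. The only presentational point worth tightening is to state explicitly that $r(x)>0$ exists for $x\neq0$ because the left-hand side of the defining equation is strictly increasing and unbounded in $r$ and equals $1<1+a^2$ at $r=0$; you implicitly use $r>0$ when dividing by $r$ in $\Phi$.
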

Moreover, we also get the following $ \ell^{2} $-upper bound for $|x|_{a}$\,.
\begin{lem}\label{l:estimate} For $ a^2< 2$, we have	
	\begin{align*}
	m_{a}|x|_{a}\leq  \sqrt{2a^{2}d} |x|\, .
\end{align*}
 Furthermore, for any element $ x $ of an axis $ \{(0,\ldots,x_{j},\ldots,0) \in \mathbb{Z}^{d}\mid x_{j}\in \mathbb{Z} \} $, $ j=1,\ldots,d $, we have
 \begin{align*}
 	|x|_{a}=|x_{j}|\, .
 \end{align*}
\end{lem}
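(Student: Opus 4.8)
The plan is to prove Lemma~\ref{l:estimate} by working directly with the explicit formulae for $m_a$, $|x|_a$, and $r(x)$ given above. Recall that $|x|_a = \frac{1}{m_a}\sum_{i=1}^d x_i \sinh^{-1}(x_i r(x))$, so that $m_a |x|_a = \sum_{i=1}^d x_i \sinh^{-1}(x_i r(x))$; thus it suffices to bound this sum by $\sqrt{2a^2 d}\,|x|$. The elementary inequality $\sinh^{-1}(t) \le t$ for $t \ge 0$ (and hence $x_i \sinh^{-1}(x_i r(x)) \le x_i^2 r(x)$ for all $i$, using that $x_i$ and $\sinh^{-1}(x_i r)$ have the same sign) gives $m_a |x|_a \le r(x)\sum_{i=1}^d x_i^2 = r(x)|x|^2$. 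So the crux is to show $r(x)|x|^2 \le \sqrt{2a^2 d}\,|x|$, i.e. $r(x)|x| \le \sqrt{2a^2 d}$.

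First I would extract an upper bound on $r(x)$ from the defining equation $\frac{1}{d}\sum_{i=1}^d \sqrt{1 + x_i^2 r(x)^2} = 1 + a^2$. Using $\sqrt{1+s} \le 1 + s/2$ for $s \ge 0$ applied with $s = x_i^2 r(x)^2$, the left-hand side is at most $1 + \frac{r(x)^2}{2d}\sum_i x_i^2 = 1 + \frac{r(x)^2 |x|^2}{2d}$. Since the left-hand side equals $1 + a^2$, we get $a^2 \le \frac{r(x)^2 |x|^2}{2d}$, which is the wrong direction. Instead I should use a lower bound on $\sqrt{1+s}$: from $\sqrt{1+s} \ge 1$, we only get $1 \le 1+a^2$, useless. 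The right move is the reverse estimate $\sqrt{1+s} \ge 1 + \frac{s}{2} - \frac{s^2}{8}$ or, more simply, to bound $r(x)|x|$ by noting that each term satisfies $\sqrt{1+x_i^2 r(x)^2} \ge \sqrt{1 + x_i^2 r(x)^2}$ and that the maximum term controls things — but cleanest is: since $\sqrt{1+s}$ is concave, $\frac1d\sum_i\sqrt{1+x_i^2 r^2} \le \sqrt{1 + \frac1d\sum_i x_i^2 r^2} = \sqrt{1 + \frac{r^2|x|^2}{d}}$ by Jensen. Setting this $\ge$ wait — Jensen for concave gives $\le$, so $1+a^2 \le \sqrt{1 + r(x)^2|x|^2/d}$, hence $(1+a^2)^2 \le 1 + r(x)^2|x|^2/d$, i.e. $r(x)^2|x|^2 \ge d((1+a^2)^2 - 1) = d(2a^2 + a^4)$ — again a lower bound. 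So I need yet another route: bound $r(x)|x|$ from above. Observe that for the single index $i^*$ maximizing $|x_i|$, $\sqrt{1 + x_{i^*}^2 r(x)^2} \le \sum_i \sqrt{1+x_i^2 r(x)^2} = d(1+a^2)$ minus the other nonnegative-excess terms; more robustly, $\sqrt{x_{i^*}^2 r(x)^2} \le \sqrt{1+x_{i^*}^2 r(x)^2} \le d(1+a^2)$, giving $|x_{i^*}|r(x) \le d(1+a^2)$, hence $r(x)|x| \le \sqrt d\,|x_{i^*}| r(x) \le d^{3/2}(1+a^2)$. This is too lossy. The sharp bound must come from a more careful analysis; I expect the correct argument uses that $\sum_i x_i^2 r(x)^2 \le \sum_i (1+x_i^2r(x)^2)^2 - d$ expanded, combined with $\sum_i \sqrt{1+x_i^2r^2} = d(1+a^2)$ and Cauchy--Schwarz: $\sum_i(1+x_i^2r^2) \le \big(\sum_i\sqrt{1+x_i^2r^2}\big)\cdot\max_i\sqrt{1+x_i^2r^2}$ — still awkward. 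I would therefore instead try $(1+a^2)^2 = \big(\frac1d\sum\sqrt{1+x_i^2r^2}\big)^2 \ge \frac1d\sum(1+x_i^2r^2)\cdot\frac1d\cdot d$ — no. The honest approach: square the defining relation cleverly, or use $\sqrt{1+s}\le 1+s/2$ termwise to get $d(1+a^2)\le d + \frac{r^2|x|^2}{2}$ wait that gives $r^2|x|^2 \ge 2a^2 d$, a \emph{lower} bound on $r|x|$, which is compatible with an \emph{upper} bound $m_a|x|_a \le \sqrt{2a^2d}|x| \le r|x|^2$ only if the first inequality $m_a|x|_a\le r|x|^2$ had slack. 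Reconciling these is the main obstacle, and I suspect the intended proof does not go through $r(x)|x|^2$ at all but rather estimates $m_a$ from below (via $m_a = \cosh^{-1}(1+da^2) \ge$ something like $\sqrt{2da^2}\cdot(\text{correction})$) while estimating $|x|_a$ termwise against $|x|$ using $\sinh^{-1}(x_i r) \le$ (a multiple of) $x_i$ calibrated so the $m_a$ cancels. I would look for a combination of the form: $\sinh^{-1}(x_i r(x)) \le c\, x_i$ with $c$ depending on $a$, then $m_a|x|_a \le c\sum x_i^2 = c|x|^2$, but this needs $r(x)|x|$ bounded, returning to the same issue; alternatively aim directly at $|x|_a \le C|x|$ with $C = \sqrt{2a^2d}/m_a$ by showing $\frac1{m_a}\sinh^{-1}(x_i r) \le \frac{\sqrt{2a^2d}}{m_a}\cdot\frac{x_i^2}{\|x\|_?}$ — the bookkeeping here is the real work.

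For the second claim, the axis case $x = (0,\dots,x_j,\dots,0)$, the defining equation for $r(x)$ collapses to $\frac1d\big((d-1)\cdot 1 + \sqrt{1+x_j^2 r^2}\big) = 1+a^2$, i.e. $\sqrt{1+x_j^2 r^2} = 1 + da^2$, so $x_j^2 r(x)^2 = (1+da^2)^2 - 1$. Then $\sinh^{-1}(x_j r) = \cosh^{-1}(\sqrt{1+x_j^2r^2}) = \cosh^{-1}(1+da^2) = m_a$ (since $\sinh^{-1}(t) = \cosh^{-1}(\sqrt{1+t^2})$ for $t\ge0$, with signs handled so $x_j\sinh^{-1}(x_j r) = |x_j|\,m_a$). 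Hence $|x|_a = \frac{1}{m_a}\cdot x_j \sinh^{-1}(x_j r) = \frac{1}{m_a}\cdot |x_j|\, m_a = |x_j|$, as claimed. This part is a routine computation once the identity $\sinh^{-1}(t)=\cosh^{-1}(\sqrt{1+t^2})$ is invoked, and I would present it in a couple of lines. The first inequality is where essentially all the difficulty lies, and I would budget most of the proof-writing effort there, likely proving the cleaner consequence $r(x)\|x\|_\infty \le$ (something) first and then upgrading via norm comparisons, or else citing the relevant convexity lemma from \cite{Michta} if it already contains the needed bound.
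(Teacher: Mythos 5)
Your treatment of the axis case is correct and matches the paper's computation: the constraint collapses to $\sqrt{1+x_j^2r^2}=1+da^2$, and the identity $\sinh^{-1}(t)=\cosh^{-1}(\sqrt{1+t^2})$ together with oddness gives $x_j\sinh^{-1}(x_jr(x))=|x_j|\cosh^{-1}(1+da^2)=|x_j|\,m_a$, hence $|x|_a=|x_j|$. No issues there.

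The first inequality, however, is not proved, and that is where the entire content of the lemma lies. Your diagnostics are accurate: the chain $m_a|x|_a\le r(x)|x|^2$ obtained from $\sinh^{-1}(t)\le t$ cannot be closed, precisely because the defining constraint forces $r(x)|x|\ge\sqrt{2a^2d}$ rather than $\le$. But after correctly ruling out several dead ends you stop at ``the bookkeeping here is the real work'' without supplying an argument, so there is a genuine gap. The paper's route is a constrained optimization. Since $|\cdot|_a$ is a norm (this is the point of citing \cite{Michta}), it suffices to maximize $F(x,r)=\sum_i x_i\sinh^{-1}(x_ir)=m_a|x|_a$ over the sphere $|x|=1$ subject to $\sum_i\sqrt{1+x_i^2r^2}=(1+a^2)d$. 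Lagrange multipliers show that at a critical point all nonzero coordinates have the same absolute value $1/\sqrt{k}$, where $k$ is the number of nonzero coordinates; solving the constraint there gives $r=\sqrt{\bigl((1+a^2d/k)^2-1\bigr)k}$ and hence $F=\sqrt{k}\,\cosh^{-1}(1+a^2d/k)$. The final step is the elementary bound $\cosh^{-1}(1+t)\le\sqrt{2t}$ (which the paper obtains by pairing consecutive terms of the alternating power series of $\cosh^{-1}(1+t)$, valid for $t<2$ --- this is where the hypothesis $a^2<2$ enters, a hypothesis your sketch never uses), applied with $t=a^2d/k$ to give $F\le\sqrt{2a^2d}$. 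Note also that your closing fallback of ``citing the relevant convexity lemma from \cite{Michta}'' is not available: the paper quotes Michta--Slade only for the separate bounds $|x|\le|x|_a\le\|x\|_1$ of Lemma~\ref{l:estimate0}, and proves the present upper bound itself.
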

\begin{proof}
	We start with the second statement and consider $ x=(x_{1},0,\ldots,0) \neq0$. For any such element  on the axis, the equation $   \frac{1}{d}\sum_{i=1}^{d}\sqrt{1+x_{i}^{2}r(x)^{2}}=1+a^{2} $ translates to $ d-1+\sqrt{1+x_{1}^{2}r(x)^{2}}=d(1+a^2) $ and, thus,
	\begin{align*}
		r(x)=\frac{1}{|x_{1}|}\sqrt{(1+da^2)^2 -1} \, . 
	\end{align*}
Hence, applying the hyperbolic trigonometric formula, $\cosh^{2}- \sinh^{2}=1 $, yields 
	\begin{align*}
		|x|_a\!&=|x_{1}| \frac{\sinh^{-1}\!\left(\! \sqrt{(1+da^2)^2 -1}\right)}{\cosh^{-1}(1+da^{2})} \!=\!
		|x_{1}| \frac{\sinh^{-1}\!\left(\! \sqrt{\cosh^2\left(\cosh^{-1}(1+da^2)\!\right) -1}
			\right)}{\cosh^{-1}(1+da^{2})}\\
		& =|x_{1}| \, .
	\end{align*}  
To obtain the estimate for arbitrary elements, we    determine the critical points of the function
	\begin{align*}
		F(x,r)=\sum_{i=1}^{d}x_{i}\sinh^{-1}\left(x_{i}r\right)
	\end{align*}
for $ x\in \Z^{d} $, $ r\in[0,\infty) $, under the constraints
\begin{align*}
	f(x,r)= \sum_{i=1}^{d}\sqrt{1+x_{i}^{2}r^{2}}-(1+a^{2})d=0,
\qquad\mbox{and}\qquad
	g(x)=\frac{1}{2}\left(\sum_{i=1}^{d}x_{i}^{2}-1\right)=0,
\end{align*}
since $ m_a|x|_{a}=F(x,r(x)) $, $ f(x,r(x))=0 $ is the defining equation for $ r(x) $ and 
$ |x|=1 $ is equivalent to $ g(x)=0 $.
To apply the methods of Lagrange multipliers, we compute the partial derivatives $\partial_{i}= \partial_{x_{i}} $ and $ \partial_{r} $
\begin{align*}
	\partial_{i}F(x,r)&=\sinh^{-1}\left(x_{i}r\right)+\frac{x_{i}r}{\sqrt{1+x_{i}^{2}r^{2}}}\,, \quad
	&&\partial_{r}F(x,r)= \sum_{i=1}^{d}\frac{x_{i}^{2}}{\sqrt{1+x_{i}^{2}r^{2}}}  \,, \\
		\partial_{i}f(x,r)&=\frac{x_{i}r^{2}}{\sqrt{1+x_{i}^{2}r^{2}}} \,,\quad
		&&	\partial_{r}f(x,r)= \sum_{i=1}^{d}\frac{x_{i}^{2}r}{\sqrt{1+x_{i}^{2}r^{2}}} \,,\\
				\partial_{i}g(x)&=x_{i} \,,\quad &&\partial_{r}g(x)=0.\\
\end{align*}
Hence, letting $ f=g=0 $ and for $ \lambda,\mu\in \R $, we solve
\begin{align*}
\qquad	\partial_{i} F+\lambda \partial_{i}f+\mu \partial_{i}g&=0,\tag{I}\\
\qquad	\partial_{r} F+\lambda \partial_{r}f&=0.\tag{II}
\end{align*}
The second equation (II) reads
\begin{align*}
	 \sum_{i=1}^{d}\frac{x_{i}^{2}}{\sqrt{1+x_{i}^{2}r^{2}}}(1+\lambda r)=0
\end{align*}
so either $ x_{i}=0 $ for all $ i $ or 
\begin{align*}
	\lambda=-\frac{1}{r}.
\end{align*}
Now, $ g(x)=0 $ implies that not all $ x_{i} $ can vanish and therefore equation (I) yields 
\begin{align*}
	\sinh^{-1}\left(x_{i}r\right)+\mu x_{i}=0.
\end{align*}
Hence,  $ x_{i}=0 $ for possibly some $ i=1,\ldots,d $ and all other $ x_{i} $ have to agree (since $ \sinh^{-1} $ and the identity are both monotone increasing functions). We infer from this and $g(x)=0  $ that
\begin{align*}
	|x_{i}|=|x_{j}|=\frac{1}{\sqrt{k}}
\end{align*}
for all $ i,j\in \{l \in\{1,\ldots,d\}\mid {x_{l}}\neq 0\}=:K $ and $ k=\# K $. 
Thus, we conclude that the critical points are of the form
\begin{align*}
\frac{1}{\sqrt{k}}	(\sigma_{1}1_{K}(1),\ldots,\sigma_{d} 1_{K}(d) ),
\end{align*}
with $ \sigma_{1},\ldots,\sigma_{d}\in\{\pm 1\} $.
Computing $ r $ for these critical points from $ f(x,r)=0 $, we obtain
\begin{align*}
(1+a^{2})d=	\sum_{i\in K}\sqrt{1+{r^{2}}/{k}} + (d-k)=k\sqrt{1+{r^{2}}/{k}}+(d-k).
\end{align*}
Hence,
\begin{align*}
\left(1	+{a^{2}d}/{k}\right)^{2}=1+{r^{2}}/{k}.
\end{align*}
Therefore,
\begin{align*}
r=	\sqrt{\left(	\left(	1	+{a^2d}/{k}\right)^{2}-1\right)k}\, .
\end{align*}
Plugging the critical points $ (x, r) $ into $ F $, we obtain since $ \sinh^{-1} $ is an odd function
\begin{align*}
		F(x,r)&=\sum_{i\in K}\frac{1}{\sqrt{k}}\sinh^{-1}\left(	\sqrt{\left(	\left(	1	+{a^2d}/{k}\right)^{2}-1\right)} \right)\\
		& ={\sqrt{k}}\sinh^{-1}\left(	\sqrt{\left(	\left(	1	+{a^2d}/{k}\right)^{2}-1\right)} \right) 
\end{align*}
and applying the hyperbolic trigonometric formula $\cosh^{2}- \sinh^{2}=1 $  yields
\begin{align*}
	\ldots&={\sqrt{k}}\sinh^{-1}\left(	\sqrt{\left(	\cosh^{2}\left(\cosh^{-1}\left(	1	+{a^2d}/{k}\right)\right)-1\right)} \right)\\&=
	{\sqrt{k}}\sinh^{-1}\left(	\sinh\left(\cosh^{-1}\left(	1	+{a^2d}/{k}\right)\right)\right)\\&=
	{\sqrt{k}}\cosh^{-1}\left(1	+{a^2d}/{k} \right)  :=G(k).
\end{align*}
By expanding $ \cosh^{-1} $ into a power series for $ |t|<2 $
\begin{align*}
	\cosh^{-1}(1 + t) = \sqrt{2  t} \sum_{n=0}^\infty \frac{2^{-n} (-t)^n {(1/2)^{(n)}}}{{(2 n +1)n!}} \,,
\end{align*}
where the Pochhammer symbol $ {s^{(n)}}  $ is given by {$ s^{(n)}= s(s+1)\ldots(s+n-1) $, (it includes $n$ factors).} We obtain for $ t< 2 $ and $ n\in 2\mathbb{N}+1 $ 
\begin{align*}
	 -\frac{2^{-n} t^n (\tfrac12)^{(n)}}{(2 n +1)n!} +  \frac{2^{-n-1} t^{n+1} (\tfrac12)^{(n+1)}}{(2n+3)(n+1)!}&= -\frac{2^{-n} t^n (\tfrac12)^{(n)}}{(2n+1)n!}\!\left(\!1\!-\!\frac{t(\tfrac12+n) (2n+1)n!}{2(2n+3)(n+1)!}
	  \!\right) \!\leq \!0.	  
\end{align*} 
Thus, since  $ a^2d/k\leq a^{2}< 2 $, we can estimate the series by the first summand and obtain under the constraints $ f=g=0 $
\begin{align*}
	F(x,r)\leq 	G(k) \leq  \sqrt{2a^{2}d}  \,.
\end{align*}
From this, we infer for $ |x| =1 $
\begin{align*}
	m_{a}|x|_{a}= F(x,r(x)) \leq  \sqrt{2a^{2}d}\,, 
\end{align*}
and thus, as $ |\cdot |_{a}$ is a norm, $  m_{a}|x_{a}|\leq  \sqrt{2a^{2}d} |x|$.
This finishes the proof.
\end{proof}
\medskip

{\bf Acknowledgments:} The authors  gratefully acknowledge the financial support of the DFG. U.D. and Y.P.  acknowledge the support of the Israel Science Foundation (grant 637/19) founded by the Israel Academy of Sciences and Humanities. U.D. is also supported in part by a fellowship from the Lady Davis Foundation. 

{The authors are  indebted to the anonymous referee for providing insightful comments that improved the quality of the manuscript.}  
The authors also want to thank Rupert Frank and Dirk Hundertmark for the helpful references to the literature as well as Ira Herbst and Richard Froese for explanations on their earlier results.

{\bf  Data availability statement:}  The authors declare that the data supporting the findings of this study are available within the paper.

{\bf  Conflict of interest statement:} The authors have no conflicts of interest to declare.

\bibliographystyle{alpha}


\end{document}